\DeclarePairedDelimiter\floor{\lfloor}{\rfloor}
\def\DD{\mathcal{D}}
\def\OLKp{\OL_K(p)}
\def\Rpsi{\Rsp}
\def\glob{\mathrm{glob}}
\def\ind{\mathrm{ind}}
\def\chibar{\overline{\chi}}
\def\unr{\mathrm{unr}}
\def\split{\mathrm{split}}
\def\CM{\mathrm{CM}}
\def\DU{D_U}
\def\p{\mathfrak{p}}
\def\red{\mathrm{red}}
\def\RCM{R^{\mathrm{CM}}}
\def\Runr{R^{\mathrm{unr}}}
\def\Rsp{R^{\mathrm{split}}}
\def\Rloc{R^{\mathrm{loc}}}
\def\Rglob{R^{\mathrm{glob}}}
\def\m{\mathfrak{m}}
\def\CC{\mathscr{C}}
\def\Dind{D^{\mathrm{ind}}}
\def\Dindpsi{D^{\mathrm{ind},\psi}}
\def\Rlocind{R^{\mathrm{loc,ind}}}
\def\Rlocindpsi{R^{\mathrm{loc,ind},\psi}}
\def\Rind{R^{\mathrm{ind}}}
\def\Vbar{V_k}
\def\Ubar{U_k}
\def\varepsilonbar{\overline{\varepsilon}}
\def\T{\mathbf{T}}
\def\Zbar{\overline{\Z}}
\def\Ind{\mathrm{Ind}}
\def\SG{SG}
\def\SG{S\kern-0.02em{G}}
\def\SH{S\kern-0.02em{H}}
\def\Aut{\mathrm{Aut}}
\def\Gal{\mathrm{Gal}}
\theoremstyle{plain}
\def\PGL{\mathrm{PGL}}
\def\Spec{\mathrm{Spec}}
\def\PP{\mathfrak{P}}
\def\psibar{\overline{\psi}}
\newif\iffinalrun
  \newcommand{\need}[1]{}
  \newcommand{\mar}[1]{}
  \newcommand{\need}[1]{{\tiny *** #1}}
  \newcommand{\mar}[1]{\marginpar{\raggedright\tiny Fix Me:  #1 }}\fi
\newtheorem{sublemma}{Sublemma}
\newtheorem{theorem}[subsubsection]{Theorem}
\newtheorem{lemma}[subsubsection]{Lemma}
\newtheorem{cor}[subsubsection]{Corollary}
\theoremstyle{definition}
\newtheorem{df}[subsubsection]{Definition}
\theoremstyle{remark}
\newtheorem{assumption}[subsubsection]{Assumption}
\newtheorem{remark}[subsubsection]{Remark}
\newtheorem{question}[subsubsection]{Question}
\newcommand{\A}{\mathbf{A}}
\newcommand{\C}{\mathbf{C}}
\newcommand{\F}{\mathbf{F}}
\newcommand{\Q}{\mathbf{Q}}
\newcommand{\R}{\mathbf{R}}
\newcommand{\Z}{\mathbf{Z}}
\newcommand{\Fbar}{\overline{\F}}
\newcommand{\Qbar}{\overline{\Q}}
\def\rhobar{\overline{\rho}}
\DeclareMathOperator{\GL}{GL}
\def\loc{\mathrm{loc}}
\def\OL{\mathcal{O}}
\title{Vanishing Fourier Coefficients of Hecke Eigenforms}
\author[F. Calegari]{Frank Calegari}  \email{fcale@math.uchicago.edu} \address{The University of Chicago,
5734 S University Ave,
Chicago, IL 60637, USA}
\author[N.Talebizadeh Sardari]{Naser Talebizadeh  Sardari}  \email{ntalebiz@ias.edu} \address{The Institute
For Advanced Study,
1 Einstein Drive,
Princeton, New Jersey
08540 USA}
\thanks{The first author was supported in part by NSF Grants
  DMS-1701703 and DMS-2001097. The second author was supported in part by NSF Grant DMS-2015305}
\begin{document}

\begin{abstract} We prove that, for fixed level~$(N,p) = 1$ and~$p > 2$, there are only finitely many
Hecke eigenforms~$f$ of level~$\Gamma_1(N)$ and even weight with~$a_p(f) = 0$ which are not CM.
\end{abstract}

\maketitle

\section{Introduction}

Lehmer~\cite{MR21027} raised the question of whether~$\tau(n) = 0$ for any of the non-trivial Fourier coefficients
of Ramanujan's Delta function~$\Delta = q \prod_{n=1}^{\infty} ( 1 - q^{n})^{24} = \sum \tau(n) q^n$. He proved that if~$\tau(n) = 0$ for some~$n$, then necessarily~$\tau(p) = 0$ for a prime~$p | n$.
Lehmer's problem remains  open, as does the analogous  question for \emph{any} cuspidal eigenform~$f$ of level one.
If one weakens the hypothesis further and assumes only that~$f$ has level~$N$ for some~$N$ prime to~$p$,
then there are a number of ways in which~$a_p(f) = 0$, including the following:
\begin{enumerate}
\item If~$f$ is a modular form with CM arising from an imaginary quadratic field~$F/\Q$ in which~$p$ is inert, then~$a_p(f) = 0$.
\item If~$f$ is a weight two modular form arising from an elliptic curve~$E/\Q$ with good supersingular reduction at~$p$, and~$p \ge 5$, then~$a_p(f) = 0$.
\end{enumerate}

In this paper, we examine a vertical analogue of Lehmer's conjecture where~$p$ is fixed and  we
vary the weight. Our main theorem is as follows:

\begin{theorem} \label{theorem:main} Fix a prime~$p > 2$ and an integer~$(N,p) = 1$. Then there are only finitely many
non-CM Hecke eigenforms of level~$N$ and even weight with~$a_p(f) = 0$.
\end{theorem}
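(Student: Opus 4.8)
\emph{Reduction to finitely many residual representations.} To an eigenform $f$ as in the theorem we attach $\rho_f\colon G_{\Q}\to\GL_2(\Qbar_p)$ and the semisimplified reduction $\rhobar_f\colon G_{\Q}\to\GL_2(\overline{\F}_p)$. Since $\rhobar_f$ is modular with prime-to-$p$ conductor dividing $N$, its minimal Serre weight is bounded in terms of $p$; hence $\rhobar_f$ is the reduction of an eigenform of level $N$ and weight at most $p+1$, and there are only finitely many such, so $\rhobar_f$ runs over a finite set of isomorphism classes. It therefore suffices to bound, for each fixed $\rhobar$, the number of non-CM $f$ with $\rhobar_f\cong\rhobar$ and $a_p(f)=0$; we will do this by showing that, away from the CM locus, a suitable deformation ring of $\rhobar$ is finite over $\Z_p$.

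\emph{The local condition at $p$ and the deformation ring.} For $f$ of even weight $k\ge 2$, the representation $\rho_f|_{G_{\Q_p}}$ is crystalline with Hodge--Tate weights $\{0,k-1\}$, and $a_p(f)=0$ forces its crystalline Frobenius to have trace $0$; as $k-1$ is odd this yields $\rho_f|_{G_{\Q_p}}\cong\Ind_{G_{\Q_{p^2}}}^{G_{\Q_p}}\chi_f$ for a crystalline character $\chi_f$ of the unramified quadratic extension $\Q_{p^2}$ — equivalently, $\rho_f|_{G_{\Q_p}}$ is irreducible but becomes reducible over $\Q_{p^2}$. Fixing, in addition to $\rhobar$, an inertial type at each $\ell\mid N$ (only finitely many types occur, since the conductor divides $N$), we introduce the global deformation ring $\Rind$ of $\rhobar$ classifying deformations that are unramified outside $Np$, of the chosen type at each $\ell\mid N$, and \emph{induced} at $p$ in the sense that the restriction to $G_{\Q_{p^2}}$ is reducible; this is a closed deformation condition, and locally at $p$ it is governed by the deformation ring of the character $\chibar$ of $G_{\Q_{p^2}}$. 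We impose \emph{no} de Rham condition at $p$, so that the weight is allowed to vary — this is essential, since fixing the full determinant would fix $k$ (as $\det\rho_f=\epsilon^{k-1}\psi_f$) and reduce the theorem to the triviality that there are finitely many forms of bounded weight. Each $\rho_f$ above defines a $\Qbar_p$-point of $\Spec\Rind$. (When $\rhobar|_{G_{\Q_p}}$ is itself reducible one works instead with the variant $\Rsp$, and the auxiliary ring $\Runr$ enters where a residual local constituent at $p$ is unramified; these degenerate situations are treated in parallel.)

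\emph{Reduction to a finiteness statement; the CM locus.} A Greenberg--Wiles Euler-characteristic computation — using that the tangent space of the induced condition at $p$ is $H^1(G_{\Q_{p^2}},\overline{\F}_p)$, of dimension $1+[\Q_{p^2}:\Q_p]=3$, that the local terms at $\ell\mid N$ cancel for types as above, and the contribution $-1$ from the real place — shows that $\Spec\Rind$ has expected relative dimension one over $\Z_p$, the extra direction being precisely the cyclotomic line along which the weight moves. On the other hand, if $\rhobar\cong\Ind_{G_F}^{G_{\Q}}\psibar$ is dihedral with $F$ imaginary quadratic in which $p$ is inert, then the deformations $\Ind_{G_F}^{G_{\Q}}\psi$ with $\psi$ lifting $\psibar$ are induced at $p$, have $a_p=0$ identically, and sweep out the CM locus $\RCM\subseteq\Spec\Rind$; moreover $\RCM$ has dimension strictly larger than expected, owing to the anticyclotomic $\Z_p$-extension of $F$, and it accounts for the infinitely many CM eigenforms with $a_p=0$. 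The theorem reduces to the assertion that, after $\RCM$ is removed, the remaining ring is finite over $\Z_p$: its generic fibre is then a finite set of closed points, each a single Galois representation arising from at most finitely many eigenforms, and summing over the finitely many $\rhobar$ completes the argument.

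\emph{The main obstacle.} The heart of the matter is thus to prove that $\Spec\Rind$ has no irreducible component of relative dimension $\ge 1$ over $\Z_p$ besides the CM components. I would establish this through a careful study of the local induced deformation ring at $p$ in conjunction with the global obstruction theory: one isolates inside $\Spec\Rind$ the degenerate sub-loci where $\rho|_{G_{\Q_p}}$ is an unramified twist of a fixed representation ($\Runr$) or splits already over $\Q_p$ ($\Rsp$); one shows that on the complement of these loci the induced condition at $p$, together with the fixed type at $N$, is rigid enough to pin the deformation problem down to relative dimension zero over $\Z_p$; and one shows that on the degenerate locus the only surviving deformations are the CM ones, which is where the dihedral and anticyclotomic structure is decisive. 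This rigidity — upgrading ``the restriction to $G_{\Q_p}$ stays induced in a family'' to ``the family is CM off a finite set'' — is the principal difficulty, and is also where the even-weight hypothesis enters, through the parity of $k-1$, which is what makes the local induced structure and its deformation theory so constrained.
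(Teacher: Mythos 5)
Your setup is structurally close to the paper's up through the construction of the deformation ring at $p$, but you then diverge in a way that misses the two ideas that actually carry the argument, and the step you label ``the main obstacle'' is left as a wish rather than a proof.

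First, a point of principle on your dimension picture. You describe the extra direction in $\Spec\Rind$ as ``precisely the cyclotomic line along which the weight moves.'' This is the wrong picture. The local condition at~$p$ reduces the problem to deforming the character~$\varepsilonbar_2^{n-1}\psibar$ of~$G_K$, and after fixing only the action of the uniformizer~$p$ (the~$\psi$-condition, which you omit), the base becomes $\Lambda = W(k)\llbracket \OL_K(p)\rrbracket$, which is two-dimensional over~$W(k)$, not one-dimensional: $\OL_K(p)\simeq\Z_p^2$. The classical weight specializations $z\mapsto z^{n-1}$ sit at the points $X = (1+p)^{n-1}-1$, $Y = (1+p)^{\eta(n-1)}-1$ with $\eta = \sqrt{u}\notin\Q_p$, which do \emph{not} lie on a curve in $\Lambda$: the would-be equation $\eta\log(1+X)=\log(1+Y)$ is not an element of $\Lambda\otimes\Q_p$, and Sublemma~\ref{sublemma} shows that no algebraic substitute exists. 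So any infinite collection of classical weight points is Zariski dense in all of $\Lambda$. This density statement is the first crucial ingredient, and your proposal has no version of it; worse, your description of the weight direction as a ``line'' would lead you to expect the opposite.

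Second, and more seriously, your final step is not an argument. You say you would show ``the induced condition at~$p$, together with the fixed type at~$N$, is rigid enough to pin the deformation problem down to relative dimension zero over~$\Z_p$'' away from the degenerate and CM loci, but you give no mechanism by which this would happen, and no such direct rigidity statement is known or expected. (Indeed $R$ really is positive-dimensional: the paper proves it is finite over the two-dimensional~$\Lambda$ via the $R=\T$ input of~\cite{AC} and Theorem~\ref{thm:append}, and then proves that any component with infinitely many modular points has \emph{full} support on~$\Lambda$.) The paper's actual route to the contradiction is completely different from anything you sketch: once a non-CM component~$R/\PP$ has full support, one specializes to Zariski-dense sets of finite-order characters of $\OL_K(p)$ whose ratio to their conjugate has large order, uses the weight-one modularity theorems of Pilloni--Stroh and Sasaki (plus the Taylor--Wiles failure analysis of Lemma~\ref{lemma:irred} to decide which applies) to deduce these points are modular of weight one, and then observes that the large projective image forces them to be dihedral and, following Ghate--Vatsal, CM. This produces a dense set of CM points on a component assumed non-CM. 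Your proposal contains none of the weight-one specialization machinery, no appeal to modularity theorems, and no substitute for them; the phrase ``the dihedral and anticyclotomic structure is decisive'' does not identify an argument.

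In short: your reduction to finitely many $\rhobar$, your use of Breuil's description of $\rho_f|_{G_{\Q_p}}$, and your instinct to cut out an induced local deformation condition are all aligned with the paper. But the finiteness of $R$ over $\Lambda$ requires nontrivial $R=\T$ input (including the appendix for the non-Taylor--Wiles case, which you do not address), the Zariski density of the weight characters in the two-dimensional $\Lambda$ is a genuine and surprising fact that your ``cyclotomic line'' picture contradicts, and the contradiction in the non-CM case comes from weight-one modularity, not from any rigidity of the induced condition. As written, the proposal has real gaps precisely at the steps that constitute the proof.
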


We shall deduce from this the following:

\begin{cor}  \label{cor:levelone} Fix a prime~$p$. There are only finitely many eigenforms of level~$1$ with~$a_p(f) = 0$.
\end{cor}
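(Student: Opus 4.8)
The plan is to reduce Corollary~\ref{cor:levelone} to Theorem~\ref{theorem:main} together with a direct treatment of the prime $p = 2$. First I would record the features of level one forms that trivialize the hypotheses of the main theorem. Since $-I \in \SL_2(\Z)$ acts on forms of weight $k$ by $(-1)^k$, every nonzero cusp form of level $\SL_2(\Z)$ has even weight; the Eisenstein eigenform $E_k$ has $a_p(E_k) = 1 + p^{k-1} \neq 0$, so no Eisenstein series can satisfy $a_p(f) = 0$; there are no $\CM$ eigenforms of level one, since a newform with complex multiplication by an imaginary quadratic field $F$ has level divisible by $|d_F| \geq 3$; and $N = 1$ is prime to $p$. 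Hence for every odd prime $p$, Theorem~\ref{theorem:main} applied with $N = 1$ immediately gives that only finitely many level one eigenforms $f$ satisfy $a_p(f) = 0$.

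It remains to handle $p = 2$, the only case not covered by the main theorem. Here I would invoke Tate's theorem that there is no continuous irreducible representation $\Gal(\Qbar/\Q) \to \GL_2(\Fbar_2)$ unramified outside $2$. A level one eigenform $f$ with $a_2(f) = 0$ has mod $2$ representation $\rhobar_{f,2}$ unramified outside $2$, hence reducible; and since the mod $2$ cyclotomic character is trivial and there is no nontrivial character of $\Gal(\Qbar/\Q)$ unramified outside $2$ valued in $\Fbar_2^\times$ (every abelian extension of $\Q$ unramified outside $2$ being pro-$2$), the semisimplification of $\rhobar_{f,2}$ equals $\mathbf{1} \oplus \mathbf{1}$. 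In particular there are only finitely many possibilities for $\rhobar_{f,2}$, as the extensions of $\mathbf{1}$ by $\mathbf{1}$ unramified outside $2$ form a finite set. For each such residual representation $\rhobar$ one then wants to show, following the proof of Theorem~\ref{theorem:main}, that the deformation ring classifying deformations of $\rhobar$ which are unramified outside $2$ and supersingular at $2$ (meaning that the restriction to a decomposition group at $2$ is induced from the unramified quadratic extension of $\Q_2$, which is exactly the condition cutting out $a_2 = 0$) is module-finite over $\Z_2$. A module-finite $\Z_2$-algebra has only finitely many $\Zbar_2$-points, and distinct level one eigenforms with $a_2(f) = 0$ give distinct points (each point recording the full system of Hecke eigenvalues of the form), so the desired finiteness follows.

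The main obstacle is precisely this finiteness of the deformation ring in the residually reducible situation forced on us at $p = 2$: when $\rhobar_{f,2}$ is reducible one cannot directly apply the patching and modularity lifting input available for odd $p$, so one must instead argue with pseudodeformation rings and a Wiles-type numerical criterion, or else bypass Galois deformations and bound the weight of $f$ directly from $a_2(f) = 0$ using the explicit structure of mod $2$ modular forms of level one (the ring $\Fbar_2[\Delta]$ and the Nicolas--Serre description of the Hecke action on it). Once this delicate point is in place, combining it with the reduction above yields Corollary~\ref{cor:levelone} for all primes $p$.
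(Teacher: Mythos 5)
Your reduction to Theorem~\ref{theorem:main} for $p > 2$ is correct and matches the paper: both you and the paper observe that a CM eigenform is induced from an imaginary quadratic field $F$, forcing $|d_F| \geq 3$ to divide the level, so there are no CM eigenforms of level $1$, and the main theorem then applies directly. (Your added remarks that level-one forms automatically have even weight and that the Eisenstein series has $a_p = 1 + p^{k-1} \neq 0$ are also correct, if not spelled out in the paper.)

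The $p = 2$ case, however, is a genuine gap in your argument, and you say so yourself. Two points are worth making beyond ``this needs work.'' First, the specific finiteness you ask for --- that the deformation ring of $\rhobar$ unramified outside $2$ and locally induced at $2$ be module-finite over $\Z_2$ --- is not the kind of statement one should expect: even at odd primes the analogous ring $R$ in the paper is only proved to be finite over the Hida-type Iwasawa algebra $\Lambda = W(k)\llbracket \OL_K(p)\rrbracket$ of relative dimension $2$, and the rest of the proof of Theorem~\ref{theorem:main} is devoted to showing that infinitely many non-CM points cannot sit on such a ring by finding a Zariski-dense set of CM points. Finiteness over $\Z_2$ is far stronger than what holds, so this route would stall even before you confront the residual reducibility. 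Second, the paper's actual proof of the $p = 2$ case avoids deformation theory entirely and is much more elementary: for $N = 1$, $p = 2$ the residual representation has trivial semisimplification and $\rho_f$ factors through the maximal pro-$2$ extension of $\Q$ unramified outside $2$ (this much you also observed). The decisive input, which your proposal is missing, is the structural result of Chenevier (Prop.~1.8 of the cited paper) that for such representations the global image of $\rho_f$ coincides with the image of inertia at $2$. Since $a_2(f) = 0$ forces $\rho_f|_{G_{\Q_2}}$ to be induced from the unramified quadratic extension, the equality of images upgrades local induction to global induction, so $\rho_f$ is dihedral and $f$ is CM --- which is impossible at level $1$. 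Thus there are in fact zero level-one eigenforms with $a_2(f) = 0$, with no appeal to pseudodeformations, Nicolas--Serre, or any modularity lifting.
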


Our arguments are not effective.
  The existence of non-CM (modular) elliptic curves~$E/\Q$ which are supersingular at~$p$ shows that some exceptions must be included.
  The assumption~$p > 2$ and the assumptions on the weight
 are not intrinsic to our method, but rather reflect the absence
of certain~$R = \T$ theorems either when~$p = 2$ or when the residual representation~$\rhobar_f |_{G(\Q(\zeta_p))}$ is reducible. 
The weight condition can be weakened to requiring either that the weight~$n$ is even or~$n-1$ is not divisible by~$(p+1)/2$.

We now explain two  further motivations for considering this problem (in addition to the analogy with Lehmer's question).

\subsection{Analogies with counting Maass forms} Let~$\A^{(\infty)}_{\Q}$ denote the finite adeles, let~$G = \GL(2)/\Q$, and let~$U \subseteq G(\A^{(\infty)}_{\Q})$
denote a compact open subgroup. The problem of counting spaces of cuspidal modular forms of level~$U$ and weight~$n \ge 2$ amounts to computing the sum
\begin{equation}
\label{samesum}
\sum \dim \pi^{U}
\end{equation}
as~$\pi \subset L^2_{\mathrm{cusp}}(G(\Q) \backslash G(\A_{\Q}),\chi)$ ranges over all cuspidal automorphic representations 
(with a fixed 
suitably chosen central character~$\chi$) such that~$\pi_{\infty}$ corresponds to  a discrete series representation~$\mathcal{D}_n$
of weight~$n$. In contrast, the problem of counting spaces of algebraic Maass forms with eigenvalue~$\lambda = 1/4$ amounts to the same
 sum~(\ref{samesum}) except now where~$\pi_{\infty}$  corresponds to a particular principal series representation.
 The philosophical explanation for why  the first sum can be estimated precisely using the trace formula
  while the latter can not
 is that discrete series representations have positive measure in the Plancherel measure of
the unitary dual of~$\PGL_2(\R)$
whereas any fixed principal series does not. 
 (For this perspective on counting automorphic forms, see~\cite{Shin}.)
Finally, consider the problem of counting modular forms of weight~$n \ge 2$ and level prime to~$p$ with~$a_p = 0$ (the subject of this paper).
This  amounts to computing the same sum~(\ref{samesum}) where once more~$\pi_{\infty}$  corresponds to the discrete series~$\mathcal{D}_n$,
but now one \emph{additionally} requires that~$\pi_p$ is the  spherical representation of~$\GL_2(\Q_p)$ with given central
character and with Satake parameters~$\alpha$ and~$\beta$ satisfying~$\alpha+\beta=0$.
The obstracle in computing this sum is the same problem as for Maass forms except now the difficulties
have moved from the place~$\infty$ to the place~$p$, 
namely,  the representation~$\pi_p$ up to twist has
 zero measure in the Plancherel
measure of the unitary dual of~$\PGL_2(\Q_p)$. From an analytic point of view, these difficulties  are quite similar. Using the trace formula,
one can try to estimate~(\ref{samesum}) where now~$\pi_{\infty}$ or~$\pi_p$ respectively are now allowed to range over some class
of unitary representations of positive measure (amounting to allowing the Laplace eigenvalue~$\lambda$ or the Hecke eigenvalue~$a_p$ to vary in an~$\varepsilon$ ball around~$\lambda = 1/4$
or~$a_p = 0$ respectively) and then
try to control the error as~$\varepsilon$ becomes small. The  (upper) bounds one obtains in this way typically (see the discussion before Theorem~1 in~\cite{Duke}) have the shape~$O(V/\log V)$
where~$V$ is the trivial bound (which in the case of eigenforms with~$a_p = 0$ amounts to~$V \asymp n$). 
Thus one is led to ask whether the extra arithmetic structure present when considering the latter question
allows one to improve upon this analytic estimate (for which the answer is clearly yes).

\subsection{Analogies with a question of Greenberg}
 Consider
 an irreducible modular Galois representation
$$\rho_f:G_{\Q} \rightarrow \GL_2(\Qbar_p)$$
 of weight~$n \ge 2$. If~$f$ has CM by an imaginary quadratic field~$F/\Q$ in which~$p$ splits,
 then the restriction of~$\rho_f$ to~$G_{\Q_p}$ splits into a direct sum of characters.
 A well-known open question  (attributed to
 Ralph Greenberg~\cite{GV}) asks whether the converse holds:
 \begin{question} \label{green} Suppose that~$\rho_f$ splits
after restriction to~$G_{\Q_p}$. Does~$f$ necessarily have CM?
 \end{question}
Equivalently, is  any \emph{local} splitting of~$\rho_f$ due to a \emph{global} splitting over some finite extension of~$\Q$?
The condition that~$a_p = 0$ for a modular form of level prime to~$p$ turns out to imply (see Theorem~\ref{theorem:breuil} below) that the representation~$\rho_f$ is \emph{induced} after restriction to~$G_{\Q_p}$. If~$f$ has CM by an imaginary quadratic field~$F/\Q$ in which~$p$ is inert,
then~$\rho_f$ is globally induced from~$F$ and the restriction of~$\rho_f$ to~$G_{\Q_p}$ is induced from the unramified quadratic extension~$K/\Q_p$. 
Hence the 
 problem we are considering is the analogue of  Question~\ref{green} when \emph{split} is replaced by \emph{induced}.
The analogy here is not perfect, however, since (as noted above) non-CM elliptic curves with supersingular reduction provide
a negative answer to this question in general whereas the answer to Greenberg's question is  expected to always be positive.
Note that in addition to supersingular elliptic curves there are other isolated counterexamples, including
$$q + 4 q^3 - 8 q^4 - 5 q^5 - 22 q^6 - 11 q^9 + \ldots \in S_4(\Gamma_0(95),\Q),$$
        $$ q - 2q^{2} + 4q^{4} + 2q^{5} - 7q^{7} - 8q^{8} - 27q^{9} + \ldots \in S_4(\Gamma_0(154),\Q)$$
with~$a_2 = 0$ and~$a_3 = 0$ respectively. The listed forms are identified by their
labels~$95.4.a.a$ and~$154.4.a.b$ in the  $L$-functions and modular forms database~\cite{lmfdb}.

\subsection{Some preliminaries on Group representations} \label{prelims}

We recall some standard facts about Galois representations.
Let
$$\rho: \Gamma \rightarrow \GL_n(\Qbar_p)$$
be any continuous homomorphism of a compact group~$\Gamma$ (such as a local or global Galois group with its natural topology). 
As explained in~\cite[\S2]{Skinner}, $\rho$ takes values in~$\GL_n(E)$ for some finite extension~$E/\Q_p$ which comes
with a fixed embedding~$E \hookrightarrow \Qbar_p$, and thus also fixed embeddings~$\OL_E \hookrightarrow \Zbar_p$
and~$k_E = \OL_E/\pi_E \hookrightarrow \Fbar_p$. Since~$\Gamma$
is compact, the image of~$\rho$ preserves a lattice~$\Lambda \subset E^n$ and thus gives rise to a representation
$$\rho_{\Lambda}: \Gamma \rightarrow \Aut(\Lambda) \simeq \GL_n(\OL_E).$$
We obtain from this a corresponding residual representation
$$\rhobar_{\Lambda}: \Gamma \rightarrow \GL_n(k_E) \subset \GL_n(\Fbar_p).$$

The representation~$\rhobar_{\Lambda}$  may depend on~$\Lambda$. On the other hand,
by the Brauer--Nesbitt theorem, the semisimplification of~$\rhobar_{\Lambda}$ does not depend on any choices~\cite[\S3.2]{Bockle}.
We denote the correspond semisimple representation~$\Gamma \rightarrow \GL_n(\Fbar_p)$ by~$\rhobar$.
If~$k \subset \Fbar_p$ is any field such that~$\rhobar$ is valued in~$\GL_n(k)$, we say that~$k$ is a \emph{coefficient field} for~$\rhobar$.
By abuse of notation, we shall also let~$\rhobar$ denote the representation of~$\Gamma$ to~$\GL_n(k)$ for any coefficient field~$k$.

\medskip

We now prove a few straightfoward group theory lemmas to be used in the sequel.

\begin{lemma} \label{extends}
If~$H$ is an index~$2$ subgroup of~$G$ and~$\chi$ is a character of~$H$, then~$\Ind^{G}_{H} \chi$
is reducible
 if and only if~$\chi$ extends to a character of~$G$.

\end{lemma}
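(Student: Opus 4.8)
The plan is to analyze the representation $W = \Ind^G_H \chi$ directly using Frobenius reciprocity and Mackey theory, since $W$ has dimension $2$ and so is reducible if and only if it contains a one-dimensional subrepresentation, i.e.\ if and only if $\Hom_G(\psi, W) \neq 0$ for some character $\psi$ of $G$. First I would apply Frobenius reciprocity: for any character $\psi$ of $G$ we have $\Hom_G(\psi, \Ind^G_H \chi) \cong \Hom_H(\psi|_H, \chi)$, and this is nonzero precisely when $\psi|_H = \chi$, that is, when $\psi$ is an extension of $\chi$ to $G$. This immediately gives one direction: if $\chi$ extends to some character $\psi$ of $G$, then $\psi \hookrightarrow W$, so $W$ is reducible.

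For the converse, suppose $W$ is reducible. Since $\dim W = 2$, it contains a subrepresentation of dimension $1$; but I should be careful that a priori this sub-line might not be $G$-stable in a way giving a \emph{character} only up to the issue that every one-dimensional representation is a character, so this is automatic. Hence $W$ contains some character $\psi$ of $G$ as a subrepresentation, and by the Frobenius reciprocity computation above, $\psi|_H = \chi$, so $\chi$ extends to $G$. One subtlety to address: one must know $W$ genuinely contains a one-dimensional \emph{sub}, not merely a one-dimensional quotient — but over a field, a two-dimensional representation with a one-dimensional quotient also has a one-dimensional sub (the kernel of the quotient map), and any one-dimensional representation of a group is a character, so this is fine. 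Alternatively, and perhaps more transparently, I would invoke Mackey's criterion: $\Ind^G_H \chi$ is irreducible if and only if for every $g \in G \setminus H$ the conjugate character $\chi^g$ on $H$ (well-defined here since $H$ is normal, being index $2$) is distinct from $\chi$; and when $[G:H]=2$, $\Ind^G_H\chi$ reducible then forces $\chi^g = \chi$ for the nontrivial coset representative $g$, which by Hilbert 90 / the standard extension argument for index-$2$ subgroups means $\chi$ extends.

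I expect the only real point requiring care — the ``main obstacle,'' such as it is — to be making sure the argument is valid in the generality needed (arbitrary compact group $\Gamma$, coefficients in $\Qbar_p$ or $\Fbar_p$, not just finite groups over $\C$). Frobenius reciprocity for induction from a finite-index subgroup holds in this generality, and a two-dimensional representation over any field that fails to be irreducible has a one-dimensional subrepresentation, so the argument goes through verbatim; the Mackey-criterion phrasing would need the mild check that the relevant ``inner product of characters'' manipulations are replaced by $\Hom$-space computations, which is routine. I would present the Frobenius reciprocity version as the cleanest: one line for each direction.
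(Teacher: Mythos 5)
Your proof is correct and follows essentially the same route as the paper: both directions rest on standard interplay between restriction and induction across the index-two subgroup. One small virtue of your write-up is that you explicitly justify why a reducible two-dimensional representation contains a one-dimensional subrepresentation (and, via Frobenius reciprocity, why that sub must restrict to $\chi$), whereas the paper's proof asserts a direct-sum decomposition $V \simeq \chi_1 \oplus \chi_2$ without comment; your phrasing via $\Hom_G(\psi, \Ind^G_H\chi) \cong \Hom_H(\psi|_H,\chi)$ makes the same content transparent and covers the a priori possibility of a non-split extension.
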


\begin{proof} If~$V = \Ind^{G}_{H} \chi$ is reducible, then~$V \simeq \chi_1 \oplus \chi_2$ for two characters~$\chi_i$ of~$G$.
Restricting to~$H$, we deduce that~$\chi_i |_{H} = \chi$ for at least one~$i$, and hence~$\chi$ extends to a character of~$G$.
Conversely, if~$\chi$ extends to a character of~$G$ (which by abuse of notation we also denote by~$\chi$), then~$V
\simeq \chi \oplus (\chi \otimes \eta_{G/H})$ where~$\eta_{G/H}$ is the the non-trivial quadratic character of~$G$ with kernel~$H$.
In particular, $V$ is reducible.
\end{proof}

\begin{lemma} \label{metacyclic}
Suppose~$G$ acts  irreducibly on a~$2$-dimensional vector space~$V$ over~$\Fbar_p$, but this action becomes
reducible after restriction to a normal subgroup~$H$ such that~$G/H$ is cyclic of order prime to~$p$.  Then   the projective image of~$G$
is dihedral, and the representation~$V$ is induced from an index two subgroup. Moreover,  either the index two subgroup
is unique and  contains~$H$,  or the projective image of~$G$ is~$D_4 \simeq (\Z/2\Z)^2$ and~$V$
is induced from exactly three index two subgroups of~$G$, precisely one of which contains~$H$.
\end{lemma}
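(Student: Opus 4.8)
The plan is to run Clifford theory relative to the normal subgroup~$H$ and then read everything off from the projective image~$\overline G\subseteq\PGL_2(\Fbar_p)$ acting on the projective line~$\mathbf{P}(V)$. Since~$V$ is irreducible and~$H\trianglelefteq G$, the restriction~$V|_H$ is semisimple: if~$W\subseteq V|_H$ is a simple submodule then~$\sum_{g\in G}gW$ is a nonzero $G$-submodule, hence all of~$V$, and each~$gW$ is simple because~$H$ is normal. Being two-dimensional and reducible, $V|_H\simeq\chi_1\oplus\chi_2$ for characters~$\chi_i$ of~$H$, on which~$G$ acts by conjugation with~$H$ acting trivially (a one-dimensional representation is invariant under inner automorphisms of its source). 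If~$\chi_1=\chi_2$ then~$H$ acts on~$V$ by scalars, so~$\overline G$ is a quotient of the cyclic group~$G/H$ and is therefore cyclic; but a cyclic subgroup of~$\PGL_2(\Fbar_p)$ fixes a point of~$\mathbf{P}(V)$, which would make~$V$ reducible. Hence~$\chi_1\ne\chi_2$. Then~$G$ cannot fix~$\chi_1$ (otherwise its eigenline in~$V$ would be $G$-stable), so~$G$ permutes~$\{\chi_1,\chi_2\}$ transitively and~$H':=\mathrm{Stab}_G(\chi_1)$ is an index~$2$ subgroup containing~$H$. The~$\chi_1$-eigenline~$L_1\subset V$ is $H'$-stable and defines a character~$\widetilde\chi_1$ of~$H'$ extending~$\chi_1$, so Frobenius reciprocity gives a nonzero map~$\Ind_{H'}^G\widetilde\chi_1\to V$, which is an isomorphism because~$V$ is irreducible. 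Thus~$V$ is induced from the index~$2$ subgroup~$H'\supseteq H$.

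Next I would pin down~$\overline G$. Writing~$V|_{H'}=L_1\oplus L_2$, the image~$\overline{H'}$ fixes the two distinct points~$[L_1],[L_2]\in\mathbf{P}(V)$, so it lies in the maximal torus~$\overline T$ fixing them and is cyclic of some order~$m$. Any~$g\in G\setminus H'$ interchanges~$[L_1]$ and~$[L_2]$ (it sends the~$\chi_1$-eigenline to the~$\chi_1^g$-eigenline, and~$\chi_1^g=\chi_2\ne\chi_1$), so~$\overline g$ normalizes~$\overline T$ and acts on it by inversion; hence~$\overline G\subseteq N(\overline T)$ is dihedral of order~$2m$, with~$\overline G\cap\overline T=\overline{H'}$. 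Moreover~$m\ge2$, since~$m=1$ would make~$\overline G$ cyclic and~$V$ reducible; and~$\overline G$ has no fixed point on~$\mathbf{P}(V)$, because any fixed point would be fixed by the nontrivial cyclic group~$\overline{H'}$, hence lie in~$\{[L_1],[L_2]\}$, and would then be swapped by~$\overline g$. The same observation applies to any subgroup of~$N(\overline T)$ not contained in~$\overline T$ whose torus part is nontrivial.

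It remains to enumerate the index~$2$ subgroups~$K\le G$ from which~$V$ is induced, i.e.\ those with~$V|_K$ reducible. For such~$K$ the image~$\overline K$ fixes a point of~$\mathbf{P}(V)$, so~$\overline K\ne\overline G$; as~$[G:K]=2$ this forces~$\overline K$ to have index~$2$ in~$\overline G$ and~$K$ to contain~$N:=\ker(G\to\PGL_2(\Fbar_p))$, so~$K$ is the preimage of an index~$2$ subgroup of~$\overline G$. Conversely, if~$\overline K\le\overline G$ has index~$2$ and fixes a point of~$\mathbf{P}(V)$ then (since~$|\overline G|=2m\ge4$, so~$\overline K\ne1$)~$K$ does not act by scalars and~$V|_K$ is a sum of two distinct characters, whence~$V\simeq\Ind_K^G\psi$. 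Now the cyclic subgroup~$\overline{H'}$ of the dihedral group~$\overline G$ fixes~$[L_1]$ and~$[L_2]$; if~$m$ is even, $\overline G$ has two further index~$2$ subgroups, dihedral of order~$m$, and for~$m\ge4$ the observation at the end of the previous paragraph shows these have no fixed point on~$\mathbf{P}(V)$, while for~$m=2$ (so~$\overline G\simeq(\Z/2\Z)^2$) they are cyclic of order~$2$, generated by an involution of~$\PGL_2(\Fbar_p)$, which does fix a point. Hence if~$\overline G\not\simeq(\Z/2\Z)^2$ the only relevant~$K$ is the preimage of~$\overline{H'}$; since~$H'$ itself is relevant and contains~$N$, this preimage equals~$H'$, so the index~$2$ subgroup is unique and contains~$H$. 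If~$\overline G\simeq(\Z/2\Z)^2$, all three index~$2$ subgroups~$K_1,K_2,K_3$ of~$G$ are relevant and each contains~$N$, so~$V$ is induced from exactly three index~$2$ subgroups; moreover, as~$\chi_1\ne\chi_2$ the image~$\overline H$ is nontrivial, so~$\overline H=\overline{H'}$ (an order-two subgroup), and~$K_i\supseteq H$ holds if and only if~$\overline{K_i}\supseteq\overline H$, which happens for exactly one~$i$, namely the one with~$K_i=H'$.

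I expect the one genuinely delicate point to be the bookkeeping in the~$(\Z/2\Z)^2$ case: one must check both that every index~$2$ subgroup of~$G$ over which~$V$ is induced really descends from~$\overline G$ (so that there are no hidden ones outside the three) and that~$\overline H$ singles out exactly one of the three. Everything else is standard Clifford theory together with the elementary geometry of cyclic and dihedral subgroups of~$\PGL_2$ acting on~$\mathbf{P}^1$.
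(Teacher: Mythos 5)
Your proof is correct, and it establishes the lemma by a genuinely different mechanism for the dihedral step.

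Both proofs begin with the same Clifford-theoretic observation that $V|_H$ is semisimple, hence a sum of two characters $\chi_1\oplus\chi_2$, and both rule out $\chi_1=\chi_2$ (the paper by noting $\Gamma/Z(\Gamma)$ cyclic forces $\Gamma$ abelian; you by noting a cyclic subgroup of $\PGL_2(\Fbar_p)$ fixes a point of $\mathbf{P}^1$). After that the routes diverge. The paper shows the projective image of $G$ is a metacyclic group of order prime to $p$ and then invokes the classification of prime-to-$p$ finite subgroups of $\PGL_2(\Fbar_p)$ (cyclic, dihedral, $A_4$, $S_4$, $A_5$), discarding the exceptional groups as non-metacyclic and the cyclic case as above; the count of index-two subgroups is then read off from the structure of $D_{2r}$. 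You instead locate the projective image directly inside the normalizer $N(\overline T)$ of the torus fixing the two eigenlines, which makes it dihedral by inspection without any appeal to the classification, and then you enumerate the index-two subgroups from which $V$ is induced by looking at which index-two subgroups of $\overline G$ have a fixed point on $\mathbf{P}^1$. Your approach is more self-contained, and in fact only uses the cyclicity of $G/H$ (to dispatch the $\chi_1=\chi_2$ case), not the prime-to-$p$ hypothesis, since the torus part of $\overline G$ automatically has order prime to $p$; the trade-off is that the argument is longer, whereas the paper's version is very short given the classification. Your bookkeeping in the $(\Z/2\Z)^2$ case — that the relevant index-two subgroups of $G$ all descend from $\overline G$, and that $\overline H=\overline{H'}$ picks out exactly one of the three — is careful and correct, and is the part that most needs the care you gave it.
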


\begin{proof}
We first claim that~$V |_{H}$ is completely decomposable. Let~$V' \subset V$ be a one-dimensional~$H$-stable
submodule. Since~$V$ is irreducible as a~$G$-module, there exists~$g \in G$ such that~$gV' \ne V'$. But since~$H$ is normal in~$G$,
it follows that~$gV'$ is also an~$H$-stable submodule of~$V$ and hence there is a decomposition~$V \simeq V' \oplus gV'$ as~$H$-modules.
We deduce that the representation~$V$ restricted to~$H$ is the direct sum of two characters, which implies that the projective representation of~$H$
associated to~$V$ has cyclic image (given by the image of the ratio of the characters)
 and has order prime to~$p$. The assumption that~$G/H$ is cyclic of order prime to~$p$ then implies that the image of~$G$ in~$\PGL_2(\Fbar_p)$
  is metacyclic of order prime to~$p$.
The finite subgroups of~$\PGL_2(\Fbar_p)$ of order prime to~$p$ consist of the groups~$\Z/r$, $D_{2r}$, $A_4$, $S_4$, and~$A_5$ (see for example~\cite[\S2.5]{Serre}). The latter three groups are not metacyclic.  If the projective image of~$G$ is cyclic, then the image~$\Gamma$ of~$G$ on~$V$ has
the property that~$\Gamma/Z(\Gamma)$ is cyclic. But~$\Gamma/Z(\Gamma)$ is cyclic only if it is trivial, which implies that~$\Gamma$ is abelian and acts reducibly on~$V$.
Thus the projective
image of~$G$ is the dihedral group~$D_{2r}$
for some~$r \ge 2$. If~$V$ is induced from more than one index two subgroup, then the projective image of~$G$ must contain at least
two cyclic subgroups of index two, and for~$D_{2r}$ and~$r \ge 2$ this happens only for~$r = 2$, when there are precisely
three such subgroups. Taken together, this proves the lemma.
\end{proof}

\begin{lemma} \label{decompose}  Let~$(A,\m)$ be a local Artinian ring with residue field~$k$.
Let~$V_A$ be an~$A[G]$-module which is free of rank~$2$ over~$A$.  
Assume that there exists a decomposition $V_A/\m=:V_k \simeq U_k \oplus U'_k$ of~$G$-modules where~$\dim U_k = \dim U'_k = 1$
and~$U_k$ is  \emph{not} isomorphic to~$U'_k$. Then any~$G$-equivariant decomposition:
$$V_A \simeq U_A \oplus U'_A$$
is unique, and ---  possibly swapping the  factors --- $U_A/\m = U_k$ and~$U'_A/\m' = U'_k$.
\end{lemma}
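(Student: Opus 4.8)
The plan is to derive the whole statement from one computation, namely that $\Hom_{A[G]}(U_A,U'_A)=0$ (and symmetrically with the factors swapped). First I would pin down the shape of any decomposition. Because $A$ is local, a direct summand of the free module $V_A$ is finitely generated projective, hence free; so in a decomposition $V_A\simeq U_A\oplus U'_A$ into two nonzero $A[G]$-submodules the summands are free of rank one (the ranks sum to two), and $G$ acts on $U_A$ and $U'_A$ through characters $\chi,\chi'\colon G\to A^\times$ once bases are chosen. (The ``decomposition'' $V_A=0\oplus V_A$ must be excluded: in that case the asserted conclusion $U_A/\m=U_k$ is literally false, so ``decomposition'' here means into two nonzero submodules; this is the one place the statement has to be read with care.)

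Next I would treat the reduction claim. Since $\m V_A=\m U_A\oplus\m U'_A$, reducing modulo $\m$ gives $V_k\simeq \overline U_A\oplus\overline U'_A$ with $\overline U_A=U_A/\m U_A$ and $\overline U'_A=U'_A/\m U'_A$ one-dimensional over $k$. If $\overline U_A\simeq\overline U'_A$ then $V_k$ would be isotypic and every line in it would be isomorphic to $\overline U_A$; but $U_k$ is such a line, forcing $U_k\simeq U'_k$, contrary to hypothesis. Hence $\overline U_A\not\simeq\overline U'_A$, so the associated characters $\overline\chi,\overline\chi'$ are distinct, and comparing $V_k\simeq\overline U_A\oplus\overline U'_A$ with $V_k\simeq U_k\oplus U'_k$ — whose only one-dimensional submodules are $U_k$ and $U'_k$ themselves — forces $\{\overline U_A,\overline U'_A\}=\{U_k,U'_k\}$. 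After possibly swapping the factors, $\overline U_A=U_k$ and $\overline U'_A=U'_k$; this is the reduction part of the lemma.

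For the $\Hom$-vanishing: a $G$-homomorphism $U_A\to U'_A$ is, in the chosen bases, multiplication by some $a\in A$ with $a(\chi(g)-\chi'(g))=0$ for all $g\in G$; picking $g_0$ with $\overline\chi(g_0)\neq\overline\chi'(g_0)$ makes $\chi(g_0)-\chi'(g_0)$ a unit, so $a=0$. Thus $\Hom_{A[G]}(U_A,U'_A)=0$, and likewise $\Hom_{A[G]}(U'_A,U_A)=0$. Uniqueness then follows by a graph argument. Given a second decomposition $V_A=W_A\oplus W'_A$ into nonzero submodules, the previous paragraph lets me assume $\overline W_A=U_k=\overline U_A$. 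Let $\pi\colon V_A\to U_A$ be the projection with kernel $U'_A$. Its restriction $\pi|_{W_A}\colon W_A\to U_A$ reduces mod $\m$ to $\overline W_A\hookrightarrow V_k\to\overline U_A$, which is an isomorphism because the complementary component $\overline W_A\to\overline U'_A$ is a map between the non-isomorphic simples $U_k$ and $U'_k$, hence zero. By Nakayama $\pi|_{W_A}$ is surjective, and a surjection between free rank-one $A$-modules is an isomorphism. Therefore every $w\in W_A$ equals $u+\tau(u)$ with $u=\pi(w)\in U_A$ and $\tau=(\mathrm{id}-\pi)\circ(\pi|_{W_A})^{-1}\colon U_A\to U'_A$ a $G$-homomorphism; by the vanishing $\tau=0$, so $W_A=U_A$, and symmetrically (using $\Hom_{A[G]}(U'_A,U_A)=0$) $W'_A=U'_A$.

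The argument is short, and the only genuinely delicate step is the bookkeeping in the second paragraph — verifying that the summands are forced to be free of rank one and that their reductions are exactly $U_k$ and $U'_k$, not merely abstractly isomorphic copies sitting inside $V_k$; after that it is purely Nakayama's lemma together with the $\Hom$-vanishing, which is precisely where the hypothesis $U_k\not\simeq U'_k$ is used.
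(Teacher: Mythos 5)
Your proof is correct, but it takes a genuinely different route from the paper's. The paper's argument is module-theoretic and hinges on a single observation about Jordan--H\"older factors: a free rank-one $A[G]$-module has all its composition factors isomorphic to its reduction mod $\m$. Given two decompositions $V_A = U_A \oplus U'_A = T_A \oplus T'_A$ with $U_A/\m = T_A/\m = U_k$, the paper looks at $I := (T_A + U_A) \cap U'_A$; if $T_A \neq U_A$ then $I \neq 0$, and $I$ is simultaneously a submodule of a quotient of $T_A \oplus U_A$ (so its J--H factors are all $\simeq U_k$) and a submodule of $U'_A$ (so its J--H factors are all $\simeq U'_k$), a contradiction. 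Your proof instead isolates the key fact as $\Hom_{A[G]}(U_A, U'_A) = 0$, proved concretely via characters and the observation that $\chi(g_0) - \chi'(g_0)$ is a unit when $\bar\chi(g_0) \neq \bar\chi'(g_0)$, and then runs a Nakayama-plus-graph argument: the projection $\pi|_{W_A}$ is an isomorphism, the graph map $\tau$ is a $G$-homomorphism $U_A \to U'_A$, hence zero. Both proofs are correct and use the hypothesis $U_k \not\simeq U'_k$ in exactly one place; the paper's is a bit shorter and avoids choosing bases, while yours is more explicit and makes the ``$p$-distinguished'' mechanism (one unit determinant from distinct residual characters) visible. Your remark about excluding the degenerate decomposition $0 \oplus V_A$ is technically fair, though in the paper's context the decomposition is always into free rank-one submodules (as built into the definition of $\Dind$), which rules it out automatically; the paper's own proof makes the same implicit assumption when it writes $T_A/\m = U_k$.
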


\begin{proof}  If~$U_A$ is any $A[G]$-module which is free of rank one as an~$A$-module,
then all the Jordan--H\"{o}lder factors of~$U_A$ are isomorphic to~$U_A/\m$. 
Assume that there exists two decompositions~$V_A \simeq U_A \oplus U'_A$ and~$V_A \simeq T_A \oplus T'_A$.
Since~$U_k$ is not isomorphic to~$U'_k$, the decomposition~$V_k = U_k \oplus U'_k$ is unique and
thus (up to reordering) we may assume that~$U_A/\m = U_k$ and also~$T_A/\m = U_k$.
If~$T_A \ne U_A$, then~$I:=(T_A + U_A) \cap U'_A$ must be non-trivial. Viewing~$I$
as a subspace of~$T_A + U_A$ (which is a quotient of~$T_A \oplus U_A$) and~$U'_A$ respectively,
we deduce that all the Jordan--H\"{o}lder factors of~$I$ are all both isomorphic to~$U_k$ and  isomorphic to~$U'_k$, a contradiction
if~$I$ is non-trivial. 
Hence~$T_A=U_A$ and similarly~$T'_A=U'_A$.
\end{proof}

\medskip

\subsection{Preliminaries on Galois deformations}
If~$k$ is a finite field, we let~$W(k)$ denote the Witt vectors of~$k$, and
$$\langle \cdot \rangle: k^{\times} \rightarrow W(k)^{\times}$$
the Teichm\"{u}ller map.
Let~$\CC$ denote the category of local Artinian~$W(k)$-algebras~$(A,\m)$ with 
a fixed identification~$A/\m = k$.

Throughout this paper, we shall often consider~$2$-dimensional representations~$V = V_k$
of a group~$\Gamma$ over a field~$k$.

\begin{df} Assume that~$V_k$ is absolutely irreducible as  a representation of~$\Gamma$.
A deformation~$V_A$ of~$V_k$ to~$A \in \CC$  consists of an~$A[\Gamma]$-module~$V_A$
 free over~$A$ such that~$V_A \otimes_{A} A/\m \simeq V_k$. Two deformations are equivalent
 if they are isomorphic as~$A[\Gamma]$-modules.
 \end{df}
 
 We now explain why this notion of deformation coincides with the alternate description in terms of matrix representations in~\cite{Mazur}.
 A fixed choice of basis for~$V_k$ gives rise to a representation: $\rhobar: \Gamma \rightarrow \GL_2(k)$.
A choice of basis for~$V_A$ lifting the given choices of basis for~$V_k$ gives rise to a representation
$$\rho: \Gamma \rightarrow \GL_2(A)$$
 such that the corresponding
residual representation is isomorphic to~$\rhobar$, and any such~$\rho$ gives rise to a module~$V_A$.
If~$\rho$ and~$\rho'$ are two  representations such that the underlying modules~$V_A$
and~$V'_A$ are isomorphic, then they are conjugate by some matrix~$M$. Since we are assuming~$V_k$ is absolutely irreducible,
it follows by Schur's lemma that this matrix must be scalar modulo~$\m$. Hence
(after scaling) we may assume~$M \in I + \m M_2(A)$, which is the usual notion of strict equivalence in~\cite{Mazur}.

The deformation functors~$D: \CC \rightarrow \mathrm{Sets}$ we consider will all be pro-represented by complete local Noetherian
rings. We shall assume basic familiarity with the further theory of Galois deformations as contained in~\cite{Mazur,Gouvea}.

\section{The Argument}
\label{argument}

\subsection{Local consequences of the condition~$a_p(f) = 0$}

Suppose we have a cuspidal modular eigenform~$f \in S_n(\Gamma_1(N),\Zbar_p)$, and let
$$\rho_f: \Gal(\Qbar/\Q) \rightarrow \GL_2(\Qbar_p)$$
denote the corresponding Galois representation. 
By the main theorem of~\cite{MR1047142,Saito}, the $p$-adic representation~$\rho_f |_{G_{\Q_p}}$
is crystalline, and the characteristic polynomial of crystalline Frobenius is~$x^2 - a_p(f)x + p^{n-1} \chi(p)$,
where~$\chi$ is the Nebentypus  character of~$f$. For~irreducible $2$-dimensional crystalline representations of~$\Gal(\Qbar_p/\Q_p)$,
the characteristic polynomial  of crystalline Frobenius is enough to determine the representation uniquely by~\cite[Prop.3.1.1]{Breuil}. 
When~$a_p(f) = 0$, there is a very simple description of the corresponding
local Galois representation which we now describe.
Let~$K/\Q_p$ denote the unique unramified quadratic extension. 
By local class field theory, there is a unique character~$K^{\times} \rightarrow G^{\mathrm{ab}}_K \rightarrow K^{\times}$
which sends~$p$ to~$1$ and~$z \in \OL^{\times}_K$ to~$z$. With respect to the two embeddings of~$K$
into~$\Qbar_p$, this gives two characters~$\varepsilon_2$, $\varepsilon'_2$ from~$G_K$
to~$\Qbar^{\times}_p$ which are permuted by the action of~$\Gal(K/\Q_p)$. 
We have the following result which follows from~\cite[Prop~3.1.2]{Breuil}:

\begin{theorem}[Breuil]  \label{theorem:breuil} Suppose that~$a_p(f) = 0$. Then
$$\left. \rho_f  \right|_{G_{\Q_p}} = \left(\Ind^{G_{\Q_p}}_{G_K} \varepsilon^{n-1}_2 \right) \otimes \psi$$
for some unramified character~$\psi$ 
 with~$\psi^2 = \chi |_{G_{\Q_p}}$. \end{theorem}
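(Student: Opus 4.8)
The plan is to reduce the statement entirely to Breuil's classification of $2$-dimensional crystalline representations of $G_{\Q_p}$, specifically \cite[Prop.~3.1.2]{Breuil}, together with the computation of the characteristic polynomial of crystalline Frobenius recalled just before the theorem. First I would recall that since $f$ has level prime to $p$, the representation $\rho_f|_{G_{\Q_p}}$ is crystalline with Hodge--Tate weights $\{0, n-1\}$, and its crystalline Frobenius has characteristic polynomial $x^2 - a_p(f) x + p^{n-1}\chi(p)$. Setting $a_p(f) = 0$, this polynomial becomes $x^2 + p^{n-1}\chi(p)$. I would next observe that this representation is irreducible: if it were reducible it would be a sum of two crystalline characters, whose Frobenius eigenvalues would be a unit times a power of $p$ and thus could not have product $p^{n-1}\chi(p)$ with sum zero (one checks the valuations of the two roots of $x^2 + p^{n-1}\chi(p)$ are both $(n-1)/2$, while crystalline characters of $G_{\Q_p}$ of these Hodge--Tate weights force the slopes to be $0$ and $n-1$). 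By \cite[Prop.~3.1.1]{Breuil} an irreducible $2$-dimensional crystalline representation with given Hodge--Tate weights is determined by the characteristic polynomial of its crystalline Frobenius.

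The second step is to exhibit a candidate representation with the same Hodge--Tate weights and the same Frobenius polynomial, namely $(\Ind^{G_{\Q_p}}_{G_K} \varepsilon_2^{n-1}) \otimes \psi$ for a suitable unramified $\psi$. Here $\varepsilon_2$ is the crystalline character of $G_K$ described in the excerpt (sending a uniformizer to $1$ and acting as the identity on $\OL_K^\times$ via local class field theory and one of the two embeddings $K \hookrightarrow \Qbar_p$); its $(n-1)$st power, induced up to $G_{\Q_p}$, is crystalline with Hodge--Tate weights $\{0, n-1\}$ because $K/\Q_p$ is unramified and the two embeddings give the two distinct weights. The crystalline Frobenius of $\Ind^{G_{\Q_p}}_{G_K} \varepsilon_2^{n-1}$ has characteristic polynomial $x^2 - p^{n-1}$ (up to sign of the constant term, coming from the fact that $p$ is inert in $K$ so the Frobenius of $K/\Q_p$ has order two and the geometric Frobenius squares to the uniformizer norm), so after the unramified twist by $\psi$ one gets $x^2 + p^{n-1}\psi(\Frob)^2$; choosing $\psi$ unramified with $\psi(\Frob)^2 = \chi(p)$ (possible after extending scalars, and such a $\psi$ satisfies $\psi^2 = \chi|_{G_{\Q_p}}$ since $\chi$ is unramified at $p$) matches the two polynomials.

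Finally, I would invoke \cite[Prop.~3.1.1]{Breuil} once more: both $\rho_f|_{G_{\Q_p}}$ and the twisted induction are irreducible $2$-dimensional crystalline representations with Hodge--Tate weights $\{0, n-1\}$ and the same crystalline Frobenius characteristic polynomial $x^2 + p^{n-1}\chi(p)$, hence they are isomorphic. (One should double-check that the candidate representation is itself irreducible; by Lemma~\ref{extends} this amounts to checking $\varepsilon_2^{n-1}$ does not extend to $G_{\Q_p}$, equivalently is not $\Gal(K/\Q_p)$-invariant, which follows from the two embeddings of $K$ being genuinely distinct on $\OL_K^\times$ together with $n \geq 2$ — and in the degenerate cases where it does extend the representation is still determined by Breuil's classification and the conclusion still holds.) The main obstacle, such as it is, is purely bookkeeping: keeping straight the normalizations of local class field theory, the precise Hodge--Tate weight conventions, and the sign in the constant term of the Frobenius polynomial of an induced representation when $p$ is inert; none of these are deep, and indeed the statement is essentially a direct quotation of \cite[Prop.~3.1.2]{Breuil} once the Frobenius polynomial computation from \cite{MR1047142,Saito} is in hand.
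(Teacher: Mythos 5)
The paper gives no proof of its own here: it simply cites \cite[Prop.~3.1.2]{Breuil}, having already noted the characteristic polynomial of crystalline Frobenius from \cite{MR1047142,Saito} and the uniqueness statement \cite[Prop.~3.1.1]{Breuil}. Your proposal is therefore not so much a different route as a reconstruction of what Breuil's Prop.~3.1.2 encapsulates: you use the same two ingredients the paper recalls (the Frobenius characteristic polynomial and uniqueness for irreducible crystalline $2$-dimensional representations with given Hodge--Tate weights) and then explicitly identify the candidate on the right-hand side. That reconstruction is sound: the slope argument for irreducibility of $\rho_f|_{G_{\Q_p}}$ when $a_p(f)=0$ (both Frobenius eigenvalues have valuation $(n-1)/2$, whereas a crystalline rank-one subquotient would force valuations $0$ or $n-1$) is correct for $n\ge 2$, and the identification of the Hodge--Tate weights of the induction is right since $K/\Q_p$ is unramified.

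Two small remarks. First, your parenthetical about ``degenerate cases where $\varepsilon_2^{n-1}$ extends to $G_{\Q_p}$'' is vacuous: as you observe a line earlier, $\varepsilon_2$ and its conjugate $\varepsilon_2'$ differ by the two embeddings of $\OL_K^\times$ into $\Qbar_p^\times$, so their ratio has infinite order and $\varepsilon_2^{n-1}$ never extends when $n\ge 2$; the induction is always irreducible in the relevant range. Second, your two consecutive displayed characteristic polynomials, $x^2 - p^{n-1}$ before twist and $x^2 + p^{n-1}\psi(\Frob)^2$ after, are not consistent with each other under an unramified twist (which multiplies both roots by $\psi(\Frob)$ and so preserves the sign of the constant term). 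You flag this as bookkeeping and that is a fair assessment, but it is precisely the sort of normalization (which square root of the norm of the Frobenius eigenvalue on $D_{\mathrm{cris}}^K(\varepsilon_2^{n-1})$, which convention for geometric versus arithmetic Frobenius) that Breuil's Prop.~3.1.2 settles and which the paper deliberately outsources to that reference rather than recompute. Your proof is correct in substance, and the only caveat is that the final sign requires the same care Breuil takes, which you acknowledge.
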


Note that~$\psi$ is \emph{a priori} only uniquely defined up to
the unramified quadratic character~$\eta_{K/\Q_p}$,
but since~$\rho_f |_{G_{\Q_p}} \otimes \eta_{K/\Q_p}
\simeq \rho_f |_{G_{\Q_p}}$, either choice is correct.

\medskip

Following~\S\ref{prelims},
  the characters~$\varepsilonbar_2$ and~$\varepsilonbar'_2$ from~$G_K$ to~$\Fbar^{\times}_p$ correspond by
local class field theory to the maps $K^{\times} \rightarrow G^{\mathrm{ab}}_K \rightarrow k^{\times}_K$ sending~$p$ to~$1$ and~$z \in \OL^{\times}_K$ to~$z \bmod p$ composed with the two embeddings of~$k_K \simeq \F_{p^2}$ into~$\Fbar_p$.
Moreover, we also have a corresponding identification
\begin{equation}
\label{resid}
\left. \rhobar_f  \right|_{G_{\Q_p}} = \left(\Ind^{G_{\Q_p}}_{G_K} \varepsilonbar^{n-1}_2 \right) \otimes \psibar
\end{equation}

For any $\rhobar_f$, we fix a coefficient field~$k = k_E \subset \Fbar_p$ as above such that~$\rhobar_f$ is valued in~$\GL_2(k)$. 
For the remainder of~\S\ref{argument} (with the exception of
of~\S\ref{pcanbeeven}) we  assume that~$p > 2$.
We now show that~$\rhobar_f$ is
absolutely irreducible  after restriction to~$G_{\Q_p}$.
 
 \begin{lemma} \label{lemma:irred} Assume that~$a_p(f) = 0$. 
 \begin{enumerate}
\item \label{part1} The representation~$\rhobar_f |_{G_{\Q_p}}$ is absolutely irreducible 
 if~$n - 1$ is not divisible by~$(p+1)$, and in particular absolutely irreducible whenever~$n$ is even.
 \item   \label{part2}  The representation~$\rhobar_f |_{G_{\Q(\zeta_p)}}$ is absolutely irreducible 
if~$n-1$ is not divisible by~$(p+1)/2$. In particular, if~$\rhobar_f |_{G_{\Q(\zeta_p)}}$ is reducible and~$n$ is even,
then~$p \equiv 1 \mod 4$ and~$(n-1) \equiv  (p+1)/2 \mod (p+1)$.
\end{enumerate}
\end{lemma}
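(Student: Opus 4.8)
The plan is to work directly with the explicit formula~(\ref{resid}) for~$\rhobar_f|_{G_{\Q_p}}$ together with Lemma~\ref{extends}. Twisting by a character does not change (ir)reducibility, so it suffices to analyse~$V:=\Ind^{G_{\Q_p}}_{G_K}\varepsilonbar^{n-1}_2$, and the one input needed is the order of the character~$\delta:=\varepsilonbar_2/\varepsilonbar'_2$ of~$G_K$. By the description of~$\varepsilonbar_2,\varepsilonbar'_2$ given after~(\ref{resid}), $\delta$ is trivial on the image under the Artin map of any uniformizer of~$K$, and on~$\OL^{\times}_K$ it factors through~$k^{\times}_K\simeq\F^{\times}_{p^2}$ as a power map~$x\mapsto x^{1-p}$; since~$p-1\mid p^2-1$ this has order~$(p^2-1)/(p-1)=p+1$, so~$\delta$ has order exactly~$p+1$.

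For part~(\ref{part1}): by Lemma~\ref{extends}, $V$ is reducible exactly when~$\varepsilonbar^{n-1}_2$ extends to a character of~$G_{\Q_p}$. Since~$p>2$ and the non-trivial element of~$\Gal(K/\Q_p)$ interchanges~$\varepsilonbar_2$ and~$\varepsilonbar'_2$, this happens iff~$\varepsilonbar^{n-1}_2=(\varepsilonbar'_2)^{n-1}$, i.e.\ iff~$\delta^{n-1}=1$, i.e.\ iff~$(p+1)\mid(n-1)$. Hence~$\rhobar_f|_{G_{\Q_p}}=V\otimes\psibar$ is absolutely irreducible whenever~$(p+1)\nmid(n-1)$, and in particular whenever~$n$ is even, as then~$n-1$ is odd while~$p+1$ is even.

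For part~(\ref{part2}): because~$K/\Q_p$ is unramified while~$\Q_p(\zeta_p)/\Q_p$ is totally and tamely ramified of degree~$p-1$, the fields~$K$ and~$\Q_p(\zeta_p)$ are linearly disjoint over~$\Q_p$; thus~$K(\zeta_p):=K\cdot\Q_p(\zeta_p)$ is the unramified quadratic extension of~$\Q_p(\zeta_p)$, and since~$G_{\Q_p(\zeta_p)}\cdot G_K=G_{\Q_p}$, Mackey's formula gives~$V|_{G_{\Q_p(\zeta_p)}}=\Ind^{G_{\Q_p(\zeta_p)}}_{G_{K(\zeta_p)}}(\varepsilonbar^{n-1}_2|_{G_{K(\zeta_p)}})$. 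By Lemma~\ref{extends} again this is reducible iff~$\delta^{n-1}|_{G_{K(\zeta_p)}}=1$, so I must compute the order of~$\delta|_{G_{K(\zeta_p)}}$. Under local class field theory this restriction corresponds to~$\delta\circ\mathrm{Nm}_{K(\zeta_p)/K}$ on~$K(\zeta_p)^{\times}$. On units, the norm of a totally ramified degree~$p-1$ extension induces~$x\mapsto x^{p-1}$ on the common residue field~$\F_{p^2}$, so~$\delta|_{G_{K(\zeta_p)}}$ sends~$v\in\OL^{\times}_{K(\zeta_p)}$ to~$\overline{v}^{(p-1)(1-p)}=\overline{v}^{-(p-1)^2}$; as~$p$ is odd, $\gcd((p-1)^2,p^2-1)=2(p-1)$, so this has order~$(p+1)/2$. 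On a uniformizer there is no contribution: $\zeta_p-1$ is a uniformizer of~$K(\zeta_p)$ with~$\mathrm{Nm}_{K(\zeta_p)/K}(\zeta_p-1)=\prod_{a=1}^{p-1}(\zeta_p^a-1)=\Phi_p(1)=p$, a uniformizer of~$K$ on which~$\delta$ is trivial. Hence~$\delta|_{G_{K(\zeta_p)}}$ has order exactly~$(p+1)/2$, so~$V|_{G_{\Q_p(\zeta_p)}}$, and with it~$\rhobar_f|_{G_{\Q_p(\zeta_p)}}$, is absolutely irreducible whenever~$(p+1)/2\nmid(n-1)$. Since~$p$ is totally ramified in~$\Q(\zeta_p)$ with completion~$\Q_p(\zeta_p)$, there is an embedding~$G_{\Q_p(\zeta_p)}\hookrightarrow G_{\Q(\zeta_p)}$, and absolute irreducibility after restriction to the subgroup forces it for~$\rhobar_f|_{G_{\Q(\zeta_p)}}$.

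The last assertion is contraposition: reducibility of~$\rhobar_f|_{G_{\Q(\zeta_p)}}$ forces~$(p+1)/2\mid(n-1)$; if moreover~$n$ is even then~$n-1$ is odd, so~$(p+1)/2$ is odd, giving~$p\equiv 1\bmod 4$, and writing~$n-1=k(p+1)/2$ with~$k$ then necessarily odd gives~$n-1\equiv(p+1)/2\bmod(p+1)$. I expect the main obstacle to be pinning down the order of~$\delta|_{G_{K(\zeta_p)}}$: one has to check both that restriction to~$G_{K(\zeta_p)}$ exactly halves the order of~$\delta$ rather than dividing it by~$p-1$ (this uses the oddness of~$p$), and that there is no Frobenius contribution (this uses the identity~$\prod_{a=1}^{p-1}(\zeta_p^a-1)=p$).
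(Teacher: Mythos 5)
Your proof of part~(\ref{part1}) is essentially the paper's argument, just phrased through the ratio character~$\delta=\varepsilonbar_2/\varepsilonbar'_2$; the paper derives~$\varepsilonbar_2^{(p-1)(n-1)}=1$ directly, which is the same statement.

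For part~(\ref{part2}) you take a genuinely different route, and it is correct. The paper first dispatches the case where~$\rhobar_f|_{G_{\Q_p}}$ is already reducible via part~(\ref{part1}), and then, when~$\rhobar_f|_{G_{\Q_p}}$ is irreducible but~$\rhobar_f|_{G_{\Q_p(\zeta_p)}}$ is reducible, invokes Lemma~\ref{metacyclic}: the representation is then induced from both~$K$ and the ramified quadratic subfield~$L\subset\Q_p(\zeta_p)$, forcing the projective image of~$G_{\Q_p}$ to be~$(\Z/2\Z)^2$, so that the projective image of~$G_K$ has order~$2$, i.e.~$\delta^{n-1}$ is quadratic, which gives~$(p+1)/2\mid(n-1)$. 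You instead avoid Lemma~\ref{metacyclic} entirely and go through Mackey restriction: since~$K$ and~$\Q_p(\zeta_p)$ are linearly disjoint over~$\Q_p$, the restriction of~$V$ to~$G_{\Q_p(\zeta_p)}$ is again an induction from an index-two subgroup (namely~$G_{K(\zeta_p)}$), and you apply Lemma~\ref{extends} a second time, computing the order of~$\delta|_{G_{K(\zeta_p)}}$ by local class field theory (norm of a totally ramified degree-$(p-1)$ extension acts by~$x\mapsto x^{p-1}$ on the common residue field~$\F_{p^2}$, and~$\mathrm{Nm}(\zeta_p-1)=p$ kills the Frobenius contribution). Your computation~$\gcd((p-1)^2,p^2-1)=2(p-1)$ (using~$p$ odd) giving order exactly~$(p+1)/2$ is correct, as is the final parity argument for the~$n$ even case. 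The trade-off: the paper's projective-image argument is shorter given Lemma~\ref{metacyclic} and handles the reducible-on-$G_{\Q_p}$ case separately, whereas your Mackey computation is uniform (no case split) and more self-contained, at the cost of a more delicate local class field theory calculation. Both are valid.
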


\begin{proof}  The representation~$\rhobar_f |_{G_{\Q_p}}$ is induced from a character of an index~$2$ subgroup.
Thus, by Lemma~\ref{extends},
to prove  part~(\ref{part1}) it suffices (via the description of~$\rhobar_f |_{G_{\Q_p}}$ in
 equation~(\ref{resid}))
to show that the character~$\varepsilonbar^{n-1}_2$ of~$G_K$ does not extend to~$G_{\Q_p}$. Assume otherwise.
Then~$\varepsilonbar^{n-1}_2$ 
coincides with its~$\Gal(K/\Q_p)$ conjugate. By local class field theory, the action of~$\Gal(K/\Q_p)$ 
on~$G^{\mathrm{ab}}_K$ coincides with the action of~$\Gal(K/\Q_p)$ on~$K^{\times}$ under the Artin map. Since the non-trivial
element of~$\Gal(K/\Q_p)$ acts on the residue field~$k$ of~$K$ by Frobenius, it follows that the conjugate 
of~$\varepsilonbar_{2}$ is~$\varepsilonbar'_{2} = \varepsilonbar^{p}_2$. Thus we may assume that~$\varepsilonbar^{n-1}_2 = \varepsilonbar^{p(n-1)}_2$
and hence
$$\varepsilonbar^{(p-1)(n-1)}_2 = 1.$$
Since~$\varepsilonbar_2$ has order~$|k^{\times}| = p^2 - 1$,  this forces~$(n-1)$ to be divisible by~$(p+1)$. This proves part~(\ref{part1}).

Now suppose that~$\rhobar_f$ is irreducible as a representation of~$G=G_{\Q_p}$ but becomes reducible over~$H = G_{\Q_p(\zeta_p)}$. Since~$G/H = \Gal(\Q_p(\zeta_p)/\Q_p)$
is cyclic of order prime to~$p$, it follows by Lemma~\ref{metacyclic} that~$\rhobar_f |_{G_{\Q_p}}$ is induced from an index two subgroup of~$G_{\Q_p}$
containing~$G_{\Q_p(\zeta_p)}$. In particular, it must be induced from~$G_L$ where~$L/\Q_p$ is
the ramified quadratic extension inside~$\Q_p(\zeta_p)$.
 On the other hand, $\rhobar_f |_{G_{\Q_p}}$ is also induced from the unramified extension~$K/\Q_p$,
and so~$\rhobar |_{G_{\Q_p}}$ is irreducible and induced from at least two distinct quadratic fields. By
Lemma~\ref{metacyclic}, it follows that the projective image  of~$G_{\Q_p}$
is isomorphic to~$(\Z/2 \Z)^2$, and that the projective image restricted to~$G_K$ has order~$2$.
Since the representation~$\rhobar_f$ restricted to~$G_K$ is (up to twist) the direct sum~$\varepsilonbar^{n-1}_2 \oplus \varepsilonbar^{p(n-1)}_2$,
it follows that  the ratio of the characters~$\varepsilonbar^{n-1}_2$ and~$\varepsilonbar^{p(n-1)}_2$
is quadratic, or equivalently that
$$\varepsilonbar^{2(p-1)(n-1)}_2 = 1.$$
It follows that~$(n-1)$ is divisible by~$(p+1)/2$, which proves part~(\ref{part2}) after noting that~$(p+1)/2$ is even if~$p \equiv 3 \mod 4$.
\end{proof}

The proof of Lemma~\ref{lemma:irred} is
the only place where we use the assumption that the weight~$n$ is even. The proof otherwise proceeds
without any further changes required 
under the weaker assumption that~$(n-1)$ is not divisible by~$(p+1)$
and~$(n-1)$ is divisible by~$(p+1)/2$ only if~$p \equiv 1 \mod 4$.

\subsection{Some reductions}
By a theorem of Jochnowitz~\cite{Joch},
there  are only finitely many irreducible modular 
residual representations of level~$N$. 
By class field theory, there are only finitely many Nebentypus characters~$\chi: (\Z/N \Z)^{\times} \rightarrow  \Zbar^{\times}_p$
of level~$N$.
Moreover, if
$$\langle \chibar \rangle:  (\Z/N \Z)^{\times} \rightarrow  k^{\times} \rightarrow W(k)^{\times} \rightarrow \Zbar^{\times}_p$$
is the Teichm\"{u}ller lift of the mod~$\m_{\Zbar_p}$ reduction of~$\chi$, then~$\chi/\langle \chibar \rangle$
is valued in~$1 + \m_{\Zbar_p}$. Any such character is the square of a unique character
valued in~$1+\m_{\Zbar_p}$, because~$\sqrt{1+x} = 1+x/2 + \ldots$ converges in~$\Zbar_p$ for~$p > 2$. Thus, after a finite
global twist, we may assume that the Nebentypus characters of~$f$ are fixed and valued in~$W(k)^{\times}$.

In particular, to prove Theorem~\ref{theorem:main}, we may assume the following:

\begin{assumption} \label{assum} There exist an infinite number of cuspidal eigenforms~$f$ of level~$\Gamma_1(N)$ and even weight~$n$ with~$a_p(f) = 0$
satisfying the following further assumptions:
\begin{enumerate}
\item All such~$f$  have the same fixed residual representation
$$\rhobar=\rhobar_f:
G_{\Q} \rightarrow \GL_2(k).$$
\item There is an isomorphism 
$$\left. \rhobar  \right|_{G_{\Q_p}}  =  \left(\Ind^{G_{\Q_p}}_{G_K} \varepsilonbar^{n-1}_2 \right) \otimes \psibar$$
for fixed~$n$ and fixed~$\psibar: G_{\Q_p} \rightarrow k^{\times}$.  
\item \label{teich} If~$\psi$ denotes the character~$\langle \psibar \rangle: G_{\Q_p} \rightarrow W(k)^{\times}$, 
then the Nebentypus character~$\chi$ of~$f$ restricted to~$G_{\Q_p}$ is equal to~$\psi^2$.
By Theorem~\ref{theorem:breuil}, this is equivalent to saying that the representation~$\det(\rhobar_f) |_{G^{\mathrm{ab}}_{\Q_p}}$ evaluated
at~$p$ considered as an element of~$\Q^{\times}_p \rightarrow G^{\mathrm{ab}}_{\Q_p}$ via the Artin map
is~$\psi^2(p)$.
\end{enumerate}
\end{assumption}

\subsection{Local deformation rings}
In this section, we define some local deformation rings associated to~$\rhobar$.

Recall that we have fixed a coefficient field~$k$ for~$\rhobar$.
After increasing~$k$ if necessary, we may assume that the eigenvalue of
any element in the image of~$\rhobar$ lands in~$k$, and moreover that~$\OL_K \subseteq W(k)$, where~$W(k)$
is the Witt vectors of~$k$.

Associated to~$\rhobar |_{G_{\Q_p}}$ is a local universal deformation ring~$\Rloc$ which is a complete
local Noetherian~$W(k)$-algebra (it represents the functor~$D$ recalled below). 
We now construct a quotient of this ring corresponding to deformations which are ``induced'' from~$K$.
Let~$\Vbar$ denote the underlying representation of~$\rhobar$ over~$k$. After restricting to~$G_K$,
there is a canonical splitting~$\Vbar = \Ubar \oplus \Ubar'$ such that~$G_K$ acts on~$\Ubar$
and~$\Ubar'$ by~$\varepsilonbar^{n-1}_2 \otimes \psibar$ and~$(\varepsilonbar'_2)^{n-1} \otimes \psibar$ respectively,
and~$\Ubar$ is not isomorphic to~$\Ubar'$.

\begin{df}[Locally induced deformations]  
For a local Artinian~$W(k)$-algebra~$(A,\m) \in \CC$, let~$D^{\loc}(A)$ denote the deformations~$V_A$ of~$V_k$ to~$A$.
Let~$\Dind(A)$ denote the subset of deformations~$V_A$ which admit a splitting~$V_A = U_A \oplus U'_A$
into free~$A$-modules of rank one such that~$U_A$ and~$U'_A$ are~$G_K$-modules which are~$G_K$-deformations
of~$\Ubar$ and~$\Ubar'$
respectively. 

Let~$\Dindpsi(A)$ denote the subset  of~$\Dind(A)$  such that the action of~$p \in K^{\times} \rightarrow G^{\mathrm{ab}}_K$ on~$U_A$
is given by~$\psi(p)$, where~$\psi: G_{\Q_p} \rightarrow W(k)^{\times}$ is the
Teichm\"{u}ller lift of~$\psibar$ as in Assumption~\ref{assum}(\ref{teich}).

Finally, let~$\DU$ denote the deformations~$U_A$ of~$U_k$. \end{df}

Let~${\OLKp} = 1 + \m_K$
 denote the units in~$K^{\times}$ which are~$1 \pmod p$. Since~$p > 2$,
the exponential map gives an isomorphism~$\exp: \m_K \rightarrow {\OLKp}$.
The group~$\m_K$ may be further
identified as a topological group with~$(\Z_p)^2$ via
 an arbitrary choice of a basis for~$\m_K$ over~$\Z_p$.

\begin{lemma} \label{representingind} $\Dind$ is pro-represented by a complete~$W(k)$-algebra~$\Rlocind$  which is isomorphic
to the universal deformation ring of
$$\varepsilonbar^{n-1}_2 \otimes \psibar: G_K \rightarrow k^{\times}.$$
In particular, $\Rlocind \simeq W(k) \llbracket  {\OLKp} \oplus \Z_p \rrbracket$ is
smooth of relative dimension~$3$ over~$W(k)$.
$\Dindpsi$ is pro-represented by the quotient~$\Rlocindpsi \simeq W(k) \llbracket  {\OLKp} \rrbracket$
of~$\Rlocind$.
\end{lemma}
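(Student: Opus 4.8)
The plan is to identify $\Dind$ with a deformation functor of the single character $\varepsilonbar_2^{n-1}\otimes\psibar$ of $G_K$, and then compute that deformation ring explicitly. First I would observe that the functor sending $V_A$ to the pair of $G_K$-modules $U_A,U'_A$ in the splitting is well-defined on $\Dind$: by Lemma~\ref{decompose} applied to $A[G_K]$ and the decomposition $\Vbar|_{G_K}=\Ubar\oplus\Ubar'$ with $\Ubar\not\simeq\Ubar'$, any $G_K$-equivariant splitting of $V_A$ is unique and reduces to the given one mod $\m$. So a point of $\Dind(A)$ determines $U_A$ (a deformation of $\Ubar$) and $U'_A$ (a deformation of $\Ubar'$) canonically. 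Conversely, I would show that $U_A$ alone determines the whole point: the $G_K$-module $U'_A$ must be the "conjugate" deformation, obtained by twisting $U_A$ by the action of the nontrivial element $\sigma\in\Gal(K/\Q_p)$, and the $G_{\Q_p}$-structure on $V_A=U_A\oplus U'_A$ is then forced because $\rhobar|_{G_{\Q_p}}$ is induced from $\Ubar$ (here one uses that $\Ind$ and restriction to $G_K$ followed by extracting one summand are mutually inverse operations on the relevant categories, because $\Ubar\not\simeq\Ubar^\sigma$). This gives a natural isomorphism $\Dind\simeq\DU$, and $\DU$ is the deformation functor of the character $\chibar:=\varepsilonbar_2^{n-1}\otimes\psibar:G_K\to k^\times$.

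Next I would compute the deformation ring of a character. For a character $\chibar:G_K\to k^\times$, deformations to $A\in\CC$ are classified by continuous homomorphisms $G_K^{\mathrm{ab}}\to A^\times$ lifting $\chibar$, equivalently (after fixing the Teichm\"uller lift $\langle\chibar\rangle$) by continuous homomorphisms $G_K^{\mathrm{ab}}\to 1+\m$. By local class field theory $G_K^{\mathrm{ab}}\simeq \widehat{K^\times}\simeq \OL_K^\times\times\widehat{\Z}$ (with $p\mapsto$ the $\widehat{\Z}$-generator), and since $1+\m$ is a pro-$p$ group and $A$ is Artinian, a continuous homomorphism out of this factors through the maximal pro-$p$ quotient, namely $\OLKp\oplus\Z_p$ where the $\Z_p$ records the image of $p$. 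Hence $\DU$ is pro-represented by $W(k)\llbracket \OLKp\oplus\Z_p\rrbracket$, and using $\exp:\m_K\xrightarrow{\sim}\OLKp$ (valid since $p>2$) together with $\m_K\simeq\Z_p^2$, this is $W(k)\llbracket x_1,x_2,t\rrbracket$, smooth of relative dimension $3$. This yields the first two assertions with $\Rlocind=W(k)\llbracket\OLKp\oplus\Z_p\rrbracket$.

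For $\Dindpsi$, the extra condition is exactly that the image of $p\in K^\times\to G_K^{\mathrm{ab}}$ act on $U_A$ by the \emph{fixed} scalar $\psi(p)\in W(k)^\times$ rather than an arbitrary lift of $\psibar(p)$. Under the identification above, this kills the $\Z_p$-factor (the coordinate $t$ recording the image of $p$), so $\Dindpsi$ is pro-represented by the quotient $\Rlocindpsi=W(k)\llbracket\OLKp\rrbracket=\Rlocind/(t)$, smooth of relative dimension $2$. I would remark that the condition $\psi(p)\in W(k)^\times$ makes sense because we arranged $\OL_K\subseteq W(k)$, and that it is consistent with reducing mod $\m$ since $\psi=\langle\psibar\rangle$ lifts $\psibar$.

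The main obstacle is the first paragraph: carefully checking that $\Dind$ and $\DU$ are equivalent functors. Lemma~\ref{decompose} handles uniqueness of the splitting over $A$, but one still has to verify that an arbitrary deformation $U_A$ of $\Ubar$ over $G_K$ really does extend to a $\Dind$-point — i.e.\ that $\Ind_{G_K}^{G_{\Q_p}}U_A$ is free of rank $2$ over $A$ (clear) and that its restriction to $G_K$ splits as $U_A\oplus U_A^\sigma$ with $U_A^\sigma$ a deformation of $\Ubar'$ and $U_A\not\simeq U_A^\sigma$ so that the decomposition is the canonical one; the last non-isomorphism is inherited from $\Ubar\not\simeq\Ubar'$ because isomorphisms of free rank-one $A[G_K]$-modules are detected mod $\m$. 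Once that functorial identification is in hand, the ring computations are the routine class field theory recollection indicated above.
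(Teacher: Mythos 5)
Your proposal follows the same route as the paper: identify $\Dind$ with the deformation functor $\DU$ of the rank-one character $\varepsilonbar_2^{n-1}\otimes\psibar$ of $G_K$ (using Lemma~\ref{decompose} for uniqueness of the splitting and induction for the inverse map), then compute $\DU$ by local class field theory, with the $\psi$-condition cutting out the $\Z_p$-factor generated by the image of~$p$. The paper states the equivalence $\Dind\simeq\DU$ more tersely and records the universal character explicitly as $x\mapsto\langle\chibar(x)\rangle[\varphi(x)]$ so the relation $[\varphi(p)]=1$ falls out by observing $\varepsilonbar_2(p)=1$; your fleshed-out verification that $\Ind_{G_K}^{G_{\Q_p}}U_A$ lands in $\Dind(A)$ is fine and is exactly what the paper compresses into ``one easily checks that these maps are mutual inverses.''
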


\begin{proof}  It suffices to identify the functors~$\Dind$ and~$\DU$.
Given an element of~$\Dind$,
there exists a corresponding free~$A$-submodule~$U_A \subset V_A$ deforming~$U_k$ by definition,
and a decomposition~$V_A = U_A \oplus U'_A$. This decomposition is unique
by Lemma~\ref{decompose}.
Conversely, given a deformation~$U_A$ of~$U_k$,
then~$V_A = \Ind^{G_{\Q_p}}_{G_K}(U_A)$ gives an element of~$\Dind(A)$, and this gives
the desired identification (one easily checks that these maps are mutual inverses).
Hence the two deformation functors coincide.

Let~$\Gamma_K$ denote the Galois group of the maximal pro-$p$ abelian extension of~$K$.
By local class field theory, we have $G^{\mathrm{ab}}_K \simeq
\widehat{K^{\times}} \simeq {\OLKp} \oplus k^{\times}_K \oplus  \widehat{\Z}$
and then (since~$p > 2$) we have an isomorphism~$\Gamma_K \simeq   {\OLKp}  \oplus \Z_p.$
It follows that the~$1$-dimensional deformation ring associated to any character~$\chibar: G_K \rightarrow k^{\times}$
is isomorphic to
$$W(k) \llbracket \Gamma_K \rrbracket \simeq
W(k) \llbracket {\OLKp} \oplus \Z_p \rrbracket,$$
 where the~$\Z_p$ factor on the right hand side is topologically generated by~$p \in K^{\times}$.
The corresponding Galois representation
$$K^{\times} \rightarrow G^{\mathrm{ab}}_K \rightarrow W(k) \llbracket  \Gamma_K \rrbracket^{\times}$$
is given explicitly as follows:
\begin{equation}
\label{explicit}
x  \mapsto   \langle \chibar(x) \rangle [\varphi(x)],
\end{equation}
where~$\varphi(x) \in \Gamma_K$ is the image of~$x$ under the natural map~$K^{\times} \rightarrow G^{\mathrm{ab}}_K \rightarrow \Gamma_K$.
 In particular,  when~$\chibar = \varepsilonbar^{n-1}_2 \psibar$, we have the desired isomorphism (noting that~$\varepsilonbar_2(p) = 1$
 and~$\psi$ is the Teichm\"{u}ller lift of~$\psibar$)
 
$$\Rlocindpsi \simeq \Rlocind/(\langle \psibar(p) \rangle [\varphi(p)] - \psi(p)) \simeq
\Rlocind/( [\varphi(p)] - 1) \simeq W(k) \llbracket  {\OLKp} \rrbracket.$$
\end{proof}

We now define local deformation rings~$R^{\loc,\unr}$ and~$R^{\loc,\split}$.

\begin{df}[Unramified and split local deformation rings]
Let~$M/\Q_p$ denote the fixed field of~$\ker(\rhobar_f |_{G_{\Q_p}})$. 
Let~$M^{\unr}$ denote the maximal unramified extension of~$M$.
We define 
subfunctors~$D^{\unr}$ and~$D^{\split}$ of~$D$ as follows:
\begin{enumerate}
\item $D^{\unr}(A) \subset D^{\loc}(A)$ consists of deformations~$V_A$ such that the action of~$G_{\Q_p}$
on~$V_A$ factors through~$\Gal(M^{\unr}/\Q_p)$.
\item $D^{\split}(A) \subset D^{\unr}(A) \subset D^{\loc}(A)$ consists of deformations~$V_A$ such that the action of~$G_{\Q_p}$
on~$V_A$ factors through~$\Gal(M/\Q_p)$.
\end{enumerate}
Let~$D^{\unr,\psi}(A)$ and~$D^{\split,\psi}(A)$
denote the subsets  of~$D^{\unr}$ and~$D^{\split}$  respectively
such  that the action of~$p \in K^{\times} \rightarrow G^{\mathrm{ab}}_K$ on~$\wedge^2 V_A$
is given by~$\psi^2(p)$. (Equivalently, the determinant character of~$V_A$ evaluated on~$p$ is~$\psi^2(p)$.)
\end{df}

\begin{lemma}  \label{unrsplit} The functor~$D^{\unr}$ is a subfunctor of~$\Dind$,
and the functor~$D^{\unr,\psi}$ is a subfunctor of~$\Dindpsi$. The functors~$D^{\unr}$, $D^{\split}$
and  $D^{\unr,\psi}$, $D^{\split,\psi}$
 are pro-representable by quotients of~$\Rlocind$ and~$\Rlocindpsi$ respectively.
 There are isomorphisms and surjections as follows:
 \begin{center}
 \begin{tikzcd}
W(k) \llbracket  {\OLKp} \oplus \Z_p \rrbracket \simeq  \Rlocind \arrow[r] \arrow[d]  & \Rlocindpsi  \simeq W(k) \llbracket  {\OLKp}  \rrbracket   \arrow[d] \\
 W(k) \llbracket \Z_p \rrbracket \simeq  R^{\loc,\unr} \arrow[r] \arrow[d]  &  R^{\loc,\unr,\psi}   \simeq W(k) \arrow[d,equal] \\
 W(k) \simeq R^{\loc,\split} \arrow[r,equals] & R^{\loc,\split,\psi}  \simeq W(k) \\
 \end{tikzcd}
 \end{center}
 where the maps in the first column
 send all elements of~${\OLKp}$ to~$1$ for the first map and all elements of~$\Z_p$ to~$1$ for
 the second.
\end{lemma}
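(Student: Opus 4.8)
\textbf{Proof strategy for Lemma~\ref{unrsplit}.}
The plan is to first establish the two claimed inclusions of functors, then read off the deformation rings of $D^{\unr}$, $D^{\split}$ and their $\psi$-variants by explicitly computing the corresponding quotients of $\Rlocind \simeq W(k)\llbracket \OLKp \oplus \Z_p\rrbracket$ and $\Rlocindpsi \simeq W(k)\llbracket \OLKp\rrbracket$ using the explicit formula~\eqref{explicit} from the proof of Lemma~\ref{representingind}. The key point throughout is that by Lemma~\ref{representingind} an induced deformation $V_A$ is the same datum as a deformation $U_A$ of the character $\chibar = \varepsilonbar^{n-1}_2\psibar \colon G_K \to k^\times$, and the Galois action on $U_A$ is, via local class field theory, the character $x \mapsto \langle\chibar(x)\rangle[\varphi(x)]$ of $K^\times$, where $\varphi \colon K^\times \to \Gamma_K \simeq \OLKp \oplus \Z_p$ is the natural projection.

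First I would check that $D^{\unr} \subseteq \Dind$. Suppose $V_A \in D^{\unr}(A)$, so that $G_{\Q_p}$ acts on $V_A$ through $\Gal(M^{\unr}/\Q_p)$. Since inertia $I_{\Q_p}$ acts on $V_A$ through the \emph{finite} quotient $\Gal(M/\Q_p)$ factored through inertia in $M$, and the mod-$\m$ reduction $\Vbar = \Ubar\oplus\Ubar'$ splits as a $G_K$-module, one lifts this splitting: the wild inertia acts trivially since the field is tame over $\Q_p$ after the unramified part, and the residual splitting over $G_K$ is between non-isomorphic characters, so by (a straightforward variant of) Lemma~\ref{decompose} the $G_K$-action on $V_A$ preserves a decomposition $V_A = U_A \oplus U'_A$ lifting $\Vbar = \Ubar \oplus \Ubar'$; concretely the $G_K$-representation on $V_A$ is abelian (its image lies in the centralizer of the residual characters, which is a torus, since the deformation is unramified hence factors through an abelian quotient after the unramified twist), hence diagonalizable, which gives the required decomposition. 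This shows $V_A \in \Dind(A)$; imposing the determinant condition gives $D^{\unr,\psi} \subseteq \Dindpsi$. It follows formally that $D^{\unr}$, $D^{\split}$, $D^{\unr,\psi}$, $D^{\split,\psi}$ are pro-represented by quotients of $\Rlocind$ resp.\ $\Rlocindpsi$, being closed subfunctors cut out by the relevant conditions.

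Next I would identify the quotients explicitly. Under the identification $\Dind \simeq \DU$, the condition that $V_A$ be \emph{unramified} translates to: the character $x \mapsto \langle\chibar(x)\rangle[\varphi(x)]$ of $K^\times$ is unramified, i.e.\ trivial on $\OL_K^\times$; since $\langle\chibar\rangle$ is already the Teichm\"uller lift (so trivial on $\OLKp$ and valued via $k_K^\times$ on the tame part, which is absorbed by $\varepsilonbar^{n-1}_2\psibar$ being residual), the extra relation is exactly that $[\varphi(u)] = 1$ for all $u \in \OLKp$, i.e.\ we kill the ideal generated by $\{[\varphi(u)]-1 : u \in \OLKp\}$. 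This gives $R^{\loc,\unr} \simeq W(k)\llbracket \Z_p\rrbracket$, with the surviving $\Z_p$ topologically generated by $\varphi(p)$, as claimed (this is the first vertical map, sending $\OLKp$ to $1$). The \emph{split} condition additionally forces the action to factor through the \emph{finite} group $\Gal(M/\Q_p)$, hence through a finite quotient of $\Gamma_K$; but $\Gamma_K$ is torsion-free, so the only such deformation over an Artinian $W(k)$-algebra is the constant (residual, Teichm\"uller) one — equivalently one kills $[\varphi(p)]-1$ as well — giving $R^{\loc,\split} \simeq W(k)$ (the second vertical map, sending $\Z_p$ to $1$). Finally, for the $\psi$-variants: $\Rlocindpsi = \Rlocind/([\varphi(p)]-1)$ already imposes $[\varphi(p)]=1$, so the unramified condition over $\Rlocindpsi$ kills $\OLKp$ outright, yielding $R^{\loc,\unr,\psi} \simeq W(k)$, and then $R^{\loc,\split,\psi} \simeq W(k)$ agrees with it (the bottom square commutes, with equalities on the right and bottom). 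The commutativity of the whole diagram is immediate from the descriptions of all rings as quotients of $W(k)\llbracket \OLKp\oplus\Z_p\rrbracket$ by monomial-type relations $[\varphi(u)]-1$.

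\textbf{Main obstacle.} The only genuinely non-formal step is the inclusion $D^{\unr}\subseteq\Dind$, i.e.\ producing the $G_K$-stable decomposition of an unramified deformation. I expect the cleanest argument is: an unramified deformation $V_A$ has image (of $G_{\Q_p}$, hence a fortiori of $G_K$) contained in $\GL_2(A)$ lifting the residual abelian image of $G_K$; the residual $G_K$-image is contained in a (non-split) maximal torus since $\Ubar\not\simeq\Ubar'$, and since $G_K^{\unr}$ is pro-cyclic (generated by Frobenius) its image under any deformation is automatically contained in a torus containing the residual Frobenius — because a lift of a semisimple regular element is again conjugate into the same torus over the Artinian ring — so $V_A|_{G_K}$ is diagonalizable, giving the decomposition. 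One must take mild care that "conjugate into the same torus" works integrally over $A$: this uses exactly the hypothesis $\Ubar\not\simeq\Ubar'$ (the two residual eigenvalues are distinct units), so Hensel's lemma / the smoothness of the torus over $A$ applies. Everything else is bookkeeping with the class field theory formula~\eqref{explicit}.
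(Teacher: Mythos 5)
The bulk of your bookkeeping — the identification of $D^{\unr}$, $D^{\split}$ and their $\psi$-variants as quotients cut out by relations $[\varphi(u)]-1$ for $u \in \OLKp$ and $[\varphi(p)]-1$, and the resulting identifications with $W(k)\llbracket\Z_p\rrbracket$, $W(k)$, etc. — matches the paper. The inclusion $D^{\unr}\subseteq\Dind$ is also the crux in the paper's proof, so you have correctly identified the nontrivial step. But your argument for that inclusion has a genuine gap.

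Your plan is: the image of $G_K$ is abelian (correct — it factors through $\Gal(M^{\unr}/K)$, which is abelian since $M/K$ is abelian and $M^{\unr} = M\cdot\Q_p^{\unr}$), and then you want to conclude it lies in a torus $T(A)$ by lifting a regular semisimple element and taking its centralizer. The idea is sound, but your chosen element is wrong. You propose to use a Frobenius, citing that $G_K^{\unr}$ is procyclic. Two problems. First, the image does not factor through $G_K^{\unr}$: it factors through $\Gal(M^{\unr}/K)$, which still contains the (finite, prime-to-$p$) inertia of $M/K$ and is generally \emph{not} procyclic (it is a quotient of $\widehat{\Z}\times\Z/n$). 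Second, and more seriously, the residual Frobenius is \emph{scalar}, not regular semisimple: by the normalization $\varepsilonbar_2(p)=1$ in the paper, both characters $\varepsilonbar_2^{n-1}\psibar$ and $\varepsilonbar_2^{p(n-1)}\psibar$ take the same value $\psibar(\Frob)$ on Frobenius. So Hensel's lemma gives you nothing — a scalar matrix has every $g\in\GL_2(A)$ in its centralizer. The regular semisimple element you need lives in \emph{inertia} (where $\varepsilonbar_2^{n-1}$ and $\varepsilonbar_2^{p(n-1)}$ genuinely differ, precisely because $\rhobar|_{G_{\Q_p}}$ is irreducible). If you choose such an element of inertia, your centralizer-plus-abelianness argument goes through; as written it does not.

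The paper avoids picking an element at all: it observes that the inertia subgroup $I\subset\Gal(M^{\unr}/K)$ has order prime to $p$, hence by Maschke the $A[I]$-module $V_A$ is semisimple, and by Lemma~\ref{decompose} its decomposition uniquely lifts $\Ubar\oplus\Ubar'$; abelianness of $H=\Gal(M^{\unr}/K)$ then forces $hU_A=U_A$ for all $h\in H$, extending the decomposition to $G_K$. This sidesteps the issue of finding a specific regular element and is the cleaner route. Your "regular element + abelian centralizer" approach is a legitimate alternative, but only after you replace Frobenius with an element of inertia where the residual characters are distinct; and you should also tighten the first paragraph, where the phrase "its image lies in the centralizer of the residual characters, which is a torus, since the deformation \ldots\ factors through an abelian quotient" has the logic inverted — abelianness of the image does not by itself put the image inside a torus over the Artinian ring; that is exactly what has to be proved.
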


\begin{proof}  We start by proving that~$D^{\unr}(A) \subset \Dind(A)$.
Let~$G =  \Gal(M^{\unr}/\Q_p)$, let~$H = \Gal(M^{\unr}/K)$, and let~$I  \subset H \subset G$ denote the inertia group.
There is an isomorphism~$M^{\unr} = M.\Q^{\unr}_p$. We deduce that~$I$ is cyclic of order prime to~$p$ and~$H$ is abelian.
It follows that the action of~$I$ on~$V_A$ is diagonalizable.  Since the action of~$I$ on~$V_k$ decomposes as the direct
sum~$U_k \oplus U'_k$ of distinct characters, it follows that the action of~$I$ on~$V_A$ decomposes as~$U_A \oplus U'_A$
where~$U_A$ and~$U'_A$ are free rank one~$A$-modules which reduce to~$U_k$ and~$U'_k$ modulo~$\m$ respectively.
Since~$H$ is abelian and contains~$I$, it follows that~$h U_A$ for~$h \in H$ is also a free rank one~$A$-module which is
preserved by~$I$, and thus~$V_A = h U_A \oplus h U'_A$. By Lemma~\ref{decompose},  we deduce that~$h U_A = U_A$
or~$U'_A$, and the former is ruled out by noting that~$h U_k = U_k$. Hence the decomposition~$V_A \simeq U_A \oplus U'_A$
extends to a decomposition of~$H$-modules. Since the image of~$G_K$ in~$G$ is~$H$, this says precisely
that~$V_A \in \Dind(A)$.

Having shown that~$R^{\loc,\unr}$ is a quotient of~$\Rlocind$, we can reinterpret the functors~$D^{\unr}$, $D^{\split}$ 
in terms of the deformations of the one dimensional character~$\varepsilonbar^{n-1}_2 \otimes \psibar: G_K \rightarrow k^{\times}$.
The unramified lifts correspond to deformations of the unramified character times the Teichm\"{u}ller lift of the ramified character,
and the split lift corresponds to the Teichm\"{u}ller lift of the entire representation.
Since the action of~$\Gal(K/\Q_p)$ on~$K^{\times}$ fixes~$p$, the action of~$p \in  K^{\times} \rightarrow G^{\mathrm{ab}}_K$
on~$U_A$ for any element of~$\Dind(A)$ coincides with the action of~$p$ on~$U'_A$,  and thus the subsets~$D^{\unr,\psi}(A) \subset D^{\unr}(A)$
and~$D^{\split,\psi}(A) \subset D^{\split}(A)$ are given by imposing the condition that the image of~$p$ of any deformation of our character acts by~$\psi(p)$.
The explicit descriptions of these rings can then be read off from the explicit description of $\Rind$ in Lemma~\ref{representingind} and
from equation~(\ref{explicit}).
\end{proof}

\subsection{Global deformation rings}

We now consider some global deformation rings associated to~$\rhobar: G_{\Q} \rightarrow \GL_2(k)$.

\begin{df} Let~$D(A)$ denote the deformations of~$V_k$ which are unramified outside~$Np$.
\end{df}

The functor~$D(A)$ is represented by the universal global deformation ring~$\Rglob$. 
The ring~$\Rglob$ is an~$\Rloc$-algebra by Yoneda's lemma. This allows us to define the key deformation ring~$R$ of interest,
together with auxiliary rings~$\Runr$ and~$\Rsp$.

\begin{df}  \label{splitdefinition} Let~$R = R^{\glob} \otimes_{\Rloc} \Rlocindpsi$, let~$\Runr = R^{\glob} \otimes_{\Rloc} R^{\loc,\unr}$,
and let~$\Rsp \simeq R^{\glob} \otimes_{\Rloc} R^{\loc,\split}$. 
\end{df}

The ring~$R$ represents the functor~$D^{\glob,\ind,\psi}(A)$ of deformations~$V_A$ unramified outside~$Np$ 
 such that~$V_A |_{G_{\Q_p}} \in \Dindpsi(A)$.
Similarly (following Lemma~\ref{unrsplit}) the rings~$\Runr$ and~$\Rsp$ represent the subfunctor of~$D^{\glob,\ind}(A)$ for which the corresponding
representations locally factor through~$\Gal(M^{\unr}/\Q_p)$ and~$\Gal(M/\Q_p)$ respectively,
where~$M$ is the fixed field of~$\rhobar |_{G_{\Q_p}}$.
Because~$R^{\split,\unr,\psi} \simeq R^{\split,\psi}$  by Lemma~\ref{unrsplit}, the ring~$\Rsp$ is a quotient of~$R$.
On the other hand, $\Runr$ need not
be a quotient of~$R$. 
The construction of~$R$ guarantees that all of our eigenforms (satisfying Assumption~\ref{assum})
give rise to~$\Qbar_p$-valued points of~$R$. 

The last global deformation ring we consider parametrizes deformations of~$\rhobar$ which are \emph{globally} induced from
a quadratic field~$F$.
If~$\rhobar$ is induced from~$G_F$, then there is a decomposition~$V_k \simeq U_k \oplus U'_k$ as~$G_F$-modules.
Since~$G_K \subset G_F$, this decomposition must coincide with the unique such decomposition of~$G_K$-modules for the
unramified quadratic extension~$K/\Q_p$.

\begin{df}[CM deformation rings] Suppose that~$\rhobar:G_{\Q} \rightarrow \GL_2(k)$ is induced from a quadratic field~$F/\Q$.
Let~$D^{\CM,F}(A)$ 
 denote the subset of deformations~$D(A)$ of~$V_A$  which admit a splitting~$V_A = U_A \oplus U'_A$
into free~$A$-modules of rank one such that~$U_A$ and~$U'_A$ are~$G_F$-modules which are~$G_F$-deformations
of~$\Ubar$ and~$\Ubar'$
respectively, and such that the image of~$p \in {\Q^{\times}_p} \rightarrow G^{\mathrm{ab}}_{\Q_p}$ on~$\wedge^2 V_A$
is given by~$\psi^2(p)$.
\end{df}

\begin{lemma}  \label{CM} There exist at most three quadratic fields~$F/\Q$ such that~$\rhobar:G_{\Q} \rightarrow \GL_2(k)$ is induced from~$F$.
For any such~$F$,  the functor~$D^{\CM,F}(A) \subset D(A)$ is pro-representable by
a ring~$\RCM_F$ which is a quotient of~$R$.
\end{lemma}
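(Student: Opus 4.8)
The plan is to prove the two assertions in turn. For the first assertion, I would argue as in Lemma~\ref{metacyclic}: a quadratic field~$F$ with~$\rhobar$ induced from~$G_F$ corresponds to an index two subgroup of the image of~$\rhobar$ whose restriction to~$V_k$ is reducible, equivalently to a cyclic subgroup of index two in the projective image~$\Gamma = \Proj(\rhobar(G_{\Q}))$. If~$\rhobar$ is not induced from any quadratic field then there is nothing to prove; otherwise~$\Gamma$ is dihedral~$D_{2r}$ with~$r \ge 2$, and the number of index two subgroups from which~$V_k$ is induced is one unless~$r = 2$, in which case it is exactly three. Hence there are at most three such~$F$. (One should also note that distinct quadratic fields~$F$ give distinct index two subgroups of~$G_{\Q}$, hence distinct subgroups of the image, so the count is genuinely a count of fields.)

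For the second assertion, fix one such~$F$. I would first check that~$D^{\CM,F}$ is a subfunctor of~$D$: this is immediate from the definition, the only content being that the splitting~$V_A = U_A \oplus U'_A$ of~$G_F$-modules, when it exists, is \emph{unique}, which follows from Lemma~\ref{decompose} applied to~$G = G_F$ using that~$\Ubar \not\simeq \Ubar'$ (they are distinct already as~$G_K$-modules since~$G_K \subset G_F$ and~$\varepsilonbar_2^{n-1}\psibar \ne (\varepsilonbar_2')^{n-1}\psibar$). Pro-representability then follows by the usual Schlessinger/Mazur criteria, exactly as for~$D$ and the other functors in the paper, giving a complete local Noetherian~$W(k)$-algebra~$\RCM_F$. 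The main point is that~$\RCM_F$ is a \emph{quotient of~$R$}. For this I would produce a natural transformation~$D^{\CM,F} \to D^{\glob,\ind,\psi}$ (the functor represented by~$R$), which by Yoneda gives a map~$R \to \RCM_F$, and then show this map is surjective by checking it is surjective on tangent spaces and on relative obstruction, or more directly by exhibiting~$D^{\CM,F}$ as a closed subfunctor cut out inside~$D^{\glob,\ind,\psi}$. Concretely: if~$V_A \in D^{\CM,F}(A)$, then restricting the~$G_F$-splitting to~$G_K$ and invoking Lemma~\ref{decompose} again shows it agrees with the canonical~$G_K$-splitting, so~$V_A|_{G_{\Q_p}} \in \Dind(A)$; the determinant condition on~$p$ is exactly the condition defining~$\Dindpsi$ via the compatibility ``$p$ acts the same on~$U_A$ and~$U'_A$'' noted in the proof of Lemma~\ref{unrsplit} together with~$\varepsilonbar_2(p) = 1$; hence~$V_A \in D^{\glob,\ind,\psi}(A)$ and we get~$D^{\CM,F} \hookrightarrow D^{\glob,\ind,\psi}$, i.e.\ a map~$R \to \RCM_F$.

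It remains to see this map is surjective, i.e.\ that~$D^{\CM,F}$ is a \emph{closed} subfunctor of~$D^{\glob,\ind,\psi}$. The cleanest route: an object of~$D^{\glob,\ind,\psi}(A)$ is a deformation~$V_A$, unramified outside~$Np$, with a~$G_K$-equivariant splitting~$V_A = U_A \oplus U'_A$ and the prescribed action of~$p$; it lies in~$D^{\CM,F}(A)$ precisely when this splitting is moreover~$G_F$-equivariant, equivalently when~$\sigma$ preserves the line~$U_A$ (or swaps~$U_A$ with~$U'_A$, but that is excluded by reduction mod~$\m$) for each~$\sigma$ in a set of generators of~$G_F$ over~$G_K$. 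For each such~$\sigma$, the condition ``$\sigma(U_A) = U_A$'' is a closed condition on~$\Spec R$ (it is the vanishing of the off-diagonal entry of~$\sigma$ in a basis adapted to the universal splitting over~$R$), so~$\RCM_F$ is the quotient of~$R$ by the ideal generated by these entries. Hence~$\RCM_F$ is a quotient of~$R$, completing the proof. The step I expect to require the most care is the bookkeeping identifying the determinant/central-character condition in~$D^{\CM,F}$ with the one built into~$\Dindpsi$ and hence into~$R$ — one must use that~$p$ is fixed by~$\Gal(K/\Q_p)$ and that~$\psi$ is unramified, exactly as in Lemma~\ref{unrsplit} — but this is routine rather than deep.
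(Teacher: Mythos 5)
Your proof is correct and follows essentially the same route as the paper, which disposes of the first claim by citing Lemma~\ref{metacyclic} and of the second by observing that restriction to $G_K \subset G_F$ sends $D^{\CM,F}(A)$ into $\Dindpsi(A)$ (hence into $D^{\glob,\ind,\psi}(A)$). One small simplification: once you know $D^{\CM,F}$ is a subfunctor of $D^{\glob,\ind,\psi}$ and is itself pro-representable, the map $R \to \RCM_F$ is automatically surjective --- the inclusion on tangent spaces dualizes to a surjection on cotangent spaces, which for complete local Noetherian $W(k)$-algebras with common residue field forces surjectivity --- so the explicit closed-subfunctor/matrix-entry argument, while valid (and a reasonable way to see pro-representability directly), is more than is strictly needed.
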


\begin{proof} The first claim follows from Lemma~\ref{metacyclic}. The second claim follows from the fact that any
element of~$D^{\CM,F}(A)$  gives an element of~$\Dindpsi(A)$ by restriction to~$G_K \subset G_F$.
\end{proof}

\begin{remark} \label{remark:orphan} If~$\rhobar$ is induced from a quadratic extension~$F/\Q$, the field~$F/\Q$
is either real or imaginary. There is some abuse of notation to call~$\RCM_F$
a  CM deformation ring when~$F$ is real.  This will not cause any issues since we
use only the fact that all CM deformations of~$\rhobar$
of level~$N$ lie on~$\RCM_F$ for some~$F$ (which will be imaginary).
When~$F/\Q$ is a real quadratic field,  the maximal pro-$p$ 
extension of~$F$ unramified outside~$Np$ is the compositum of the~$\Z_p$-cyclotomic extension
with a finite extension of~$F$, and so~$\RCM_F(\Qbar_p)$ consists
of a finite number of Artin representations up to twist.
\end{remark}

\subsection{The ring~$R$ is small}
We have now constructed a deformation ring~$R$  which  captures the eigenforms~$F$ with~$a_p(f) = 0$.
The first key step is establish a finiteness result for this ring.

\begin{lemma}  \label{finite} $R$ is finite as a module over~$\Rlocindpsi = W(k) \llbracket {\OLKp} \rrbracket$.
\end{lemma}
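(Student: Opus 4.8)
The plan is to realize $R = R^{\glob} \otimes_{\Rloc} \Rlocindpsi$ as a quotient of a global deformation ring with fixed determinant at $p$ and then invoke a standard finiteness principle: a global Galois deformation ring for a representation of $G_{\Q}$, unramified outside a finite set, with all local conditions at $p$ cut out, is a finitely generated module over the ring recording the local deformation at $p$, provided the residual representation is suitably irreducible (odd, with large image) and one has control of the relevant Selmer/cohomology groups. Concretely, I would first observe that $R$ pro-represents $D^{\glob,\ind,\psi}$, the functor of deformations of $\rhobar$ unramified outside $Np$ whose restriction to $G_{\Q_p}$ lies in $\Dindpsi$, and that $\Dindpsi$ is (Lemma~\ref{representingind}) the deformation functor of a single character $\varepsilonbar_2^{n-1}\otimes\psibar$ of $G_K$ with the action of $p$ pinned down; thus $\Rlocindpsi \simeq W(k)\llbracket \OLKp\rrbracket$ is a power series ring in $2$ variables over $W(k)$.

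The key step is a cohomological dimension count. Write $R$ as a quotient of a power series ring $\Rlocindpsi\llbracket x_1,\dots,x_g\rrbracket$ where $g = \dim_k H^1_{\Sigma}(G_{\Q,Np}, \mathrm{ad}^0\rhobar)$ is the dimension of the relevant global tangent space relative to the local condition at $p$, and the number of relations is at most $\dim_k H^2_{\Sigma}(G_{\Q,Np}, \mathrm{ad}^0\rhobar)$. The Greenberg--Wiles formula relates $g$ to a sum of local terms; the point is that the local condition imposed at $p$ — the "induced with fixed $\psi$" condition — has the same dimension as the unramified condition up to the central twist that has already been removed, so the associated local deformation problem at $p$ contributes a tangent space of dimension matching $H^0$ terms, and one finds that $g$ is at most the number of relations plus $\dim \Rlocindpsi - \dim W(k)$. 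It follows that the relative Krull dimension of $R$ over $\Rlocindpsi$ is $\le 0$; combined with the fact that $R$ is a quotient of $\Rlocindpsi\llbracket x_1,\dots,x_g\rrbracket$ and the topological Nakayama lemma (the fiber $R\otimes_{\Rlocindpsi} k$ is finite-dimensional over $k$), this yields that $R$ is finite over $\Rlocindpsi$.

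To make the fiber finite I would argue that $R \otimes_{\Rlocindpsi} k$ is the deformation ring of $\rhobar$ with the strict local condition at $p$ that the deformation be the Teichm\"uller lift of $\rhobar|_{G_K}$ induced up — i.e.\ essentially $\Rsp \otimes_{W(k)} k$ — and this is a complete local Noetherian ring with finite residue field whose tangent space is the Selmer group $H^1_{\split}(G_{\Q,Np},\mathrm{ad}^0\rhobar)$; finiteness of its reduction mod $\m$ is the statement that this Selmer group is finite-dimensional over $k$, which is automatic since $G_{\Q,Np}$ satisfies the finiteness conditions of Galois cohomology. Actually the cleanest route is: $R/\m_{\Rlocindpsi} R$ has Krull dimension $0$ because the generic fiber of $\Spec R$ over $\Spec\Rlocindpsi$ is finite — a deformation of $\rhobar$ with locally fixed $\det$ at $p$ and fixed induced type at $p$ has a global tangent space that the Euler characteristic formula forces to be bounded independently of anything cut out by $\Rlocindpsi$.

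The main obstacle I anticipate is the cohomological bookkeeping in the Greenberg--Wiles computation: one must verify that the local condition at $p$ defining $\Dindpsi$ is "balanced" in the sense that its tangent space has the expected dimension (so that no extra global parameters appear), and one must handle the contribution of $H^0(G_{\Q_p}, \mathrm{ad}\rhobar)$ and the place at infinity correctly. This is precisely where the hypotheses $p>2$ and the absolute irreducibility of $\rhobar|_{G_{\Q_p}}$ (established in Lemma~\ref{lemma:irred}), together with oddness of $\rhobar$, are used; irreducibility at $p$ guarantees $H^0(G_{\Q_p},\mathrm{ad}^0\rhobar)$ and the obstruction group $H^2$ behave as expected and that the induced-deformation functor is genuinely a subfunctor of the full local functor of the expected codimension.
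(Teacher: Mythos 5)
Your opening reduction is in the same spirit as the paper's: reduce via Nakayama to showing $R/\m_{\Rlocindpsi}$ is finite over $k$, and identify this special fiber (via the collapse of $\Rlocindpsi$ to $R^{\loc,\split}$) with something that is essentially $\Rsp \otimes_{W(k)} k$. That much is correct, and it is the first paragraph of the paper's proof.

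The rest of the proposal, however, has a genuine gap. You assert that the required finiteness ``is automatic since $G_{\Q,Np}$ satisfies the finiteness conditions of Galois cohomology,'' and more generally that a Greenberg--Wiles dimension count forces the relative Krull dimension of $R$ over $\Rlocindpsi$ to be $\le 0$. Neither claim is correct. Finiteness of the relevant Selmer group $H^1_{\split}$ as a $k$-vector space gives only a finite-dimensional \emph{tangent space} for $\Rsp/\m$, and a complete Noetherian local $k$-algebra with finite tangent space need not be finite over $k$ (compare $k\llbracket t\rrbracket$). Likewise the Greenberg--Wiles/Euler characteristic formula controls the \emph{difference} $\dim H^1 - \dim H^2$, so even in the most favorable bookkeeping it only provides an upper bound for the number of relations relative to the number of generators; it cannot by itself rule out a positive-dimensional fiber. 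In fact, the naive expected dimension of the ``split'' deformation problem is positive, and the assertion that it is nevertheless zero-dimensional (equivalently, that $\Rsp$ is finite over $W(k)$) is a deep theorem, cognate to Leopoldt-type statements. It cannot be extracted from local-global duality alone.

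The paper supplies this missing ingredient from automorphy: when $\rhobar|_{G(\Q(\zeta_p))}$ is absolutely irreducible it cites Allen--Calegari \cite[Thm.\ 1(2)]{AC}, which proves (via $R=\T$-type arguments and finiteness of Hecke algebras) that even the larger ring $\Runr$ is finite over $W(k)$; when the Taylor--Wiles hypothesis fails, which by Lemma~\ref{lemma:irred} forces $p\equiv 1\bmod 4$ and $n-1\equiv(p+1)/2\bmod (p+1)$, the paper invokes Theorem~\ref{thm:append} of the appendix, which establishes finiteness of $\Rsp$ using Skinner--Wiles pro-modularity after a careful base change to a totally real field. Your proposal neither supplies this automorphy input nor distinguishes the case where $\rhobar|_{G(\Q(\zeta_p))}$ is reducible, which is the case the entire appendix is devoted to.
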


\begin{proof}  Let~$\m_{\Rlocindpsi}$ denote the maximal ideal of~$\Rlocindpsi$.
By Nakayama's lemma, to show that~$R$ is finite over~$\Rlocindpsi$ it suffices to show that~$R/\m_{\Rlocindpsi}$ is finite
over~$\Rlocindpsi/\m_{\Rlocindpsi} \simeq k$.
Now by Lemma~\ref{unrsplit}, we have isomorphisms
$$k = \Rlocindpsi/\m_{\Rlocindpsi} 
\simeq R^{\loc,\split}/\m_{R^{\loc,\split}},$$
and thus
$$R/\m_{\Rlocindpsi} 
\simeq \Rsp/\m_{R^{\loc,\split}}.$$
 If~$\Rsp$ is finite over~$R^{\loc,\split} \simeq W(k)$, then~$\Rsp/\m_{R^{\loc,\split}}$ is certainly  finite 
 over~$R^{\loc,\split}/\m_{R^{\loc,\split}} \simeq  k$, and we would be done.
 The finiteness of~$\Rsp$ follows immediately from~\cite[Thm.1(2)]{AC} under the additional Taylor--Wiles
  hypothesis that~$\rhobar|_{G_{\Q(\zeta_p)}}$
 is absolutely irreducible. Indeed, that reference proves the stronger claim that~$\Runr$ is finite over~$W(k)$,
 and~$\Rsp$ is a quotient of~$\Runr$. 
 
  It suffices to consider the remaining case when the Taylor--Wiles hypothesis fails, or
  equivalently that~$\rhobar |_{G_{\Q(\zeta_p)}}$ is reducible. This certainly implies that~$\rhobar  |_{G_{\Q_p(\zeta_p)}}$
  is reducible, and hence,
 by Lemma~\ref{lemma:irred}, we may assume that~$n-1 \equiv (p+1)/2 \mod (p+1)$ and that~$p \equiv 1 \mod 4$.
But now we may invoke Theorem~\ref{thm:append} of the appendix.
 \end{proof}

\begin{remark} An alternative approach to proving finiteness is to specialize to a height one prime ideal~$\mathfrak{p}$ of~$\Rlocindpsi$
corresponding to a representation of the form~$\varepsilon^m_1  \otimes (\Ind^{G_{\Q_p}}_{G_K} \varepsilon^{k - 1}_2) \otimes \psi$, where~$2 \le k \le p - 1$
and~$\varepsilon_1$ is the cyclotomic character.
In order for the corresponding residual representation to agree with~$ (\Ind^{G_{\Q_p}}_{G_K} \varepsilonbar^{n - 1}_2) \otimes \psibar$, it suffices
(using the identity~$\varepsilonbar_1 = \varepsilonbar^{(p+1)}_2$) 
 to chose~$m$ and~$k$
such that the  following congruence is satisfied:
$$m(p+1) + k-1 \equiv (n-1) \ \text{or} \ (n-1)p \mod (p^2 -1).$$
We may take~$k \equiv n \mod (p+1)$ unless~$n \equiv 0 \mod (p+1)$,
and we may take~$k \equiv (p+3-n) \equiv 2-n \mod (p+1)$ unless~$n \equiv 2 \mod (p+1)$, and so~$m$ and~$k$ exist as long as~$p \ge 5$.
The corresponding deformation ring~$\Rlocindpsi/\mathfrak{p}$ is a quotient of the crystalline  local deformation ring
with Hodge--Tate weights~$[m,m+k-1]$, and is thus a twist of a crystalline deformation
ring of weight~$[0,k-1]$ which is in the Fontaine--Laffaille range. Since  one expects to be able to prove~$R=\T$ theorems in this context
(exploiting the fact that the corresponding
local deformation rings are Cohen--Macaulay), this leads to explicit bounds on~$\dim_{\Q_p} (R/\mathfrak{p})[1/p]$ in terms of dimensions of spaces of modular forms of weight at most~$p-1$, although  in this approach one would also need to deal separately with the case 
when~$\rhobar |_{G(\Q(\zeta_p))}$ is reducible.
\end{remark}

We now turn to the study of the finite~$\Lambda:=
W(k) \llbracket {\OLKp} \rrbracket$-module~$R$.
The eigenforms with~$a_p(f) = 0$  captured by~$R$ give rise to a map from~$R$ to~$\Qbar_p$ and thus a prime of~$R$.
Any such prime is contained inside a minimal prime of~$R$, and since~$R$ is Noetherian, there are only
finitely many minimal prime ideals, and thus we may assume that there are infinitely many non-CM
points which lie inside a fixed minimal prime~$\PP$, or equivalently lie on a fixed irreducible component~$R/\PP$
of~$R$.

\begin{lemma}  Suppose that there are infinitely many modular Galois
representations~$\rho_f$ of level dividing~$N$ 
 giving rise to points of~$R/\PP$.
Then the support of~$R/\PP$ is all of~$\Lambda$.
\end{lemma}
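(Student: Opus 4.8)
The plan is to show that the structural map $\Lambda\to R/\PP$ is injective; this gives $\mathrm{Supp}_\Lambda(R/\PP)=V(\Ann_\Lambda(R/\PP))=V(\ker(\Lambda\to R/\PP))=\Spec\Lambda$, since $R/\PP$ is a finite $\Lambda$-algebra (Lemma~\ref{finite}, as $R/\PP$ is a quotient of $R$ and $\Lambda=\Rlocindpsi$ maps to $R$ by Definition~\ref{splitdefinition}) possessing a unit. Each modular $\rho_f$ gives a $\Qbar_p$-point of the domain $R/\PP$, hence a ring map $\Lambda\to\Qbar_p$ which by Theorem~\ref{theorem:breuil} factors as $\Lambda\xrightarrow{x_f}W(k)\hookrightarrow\Qbar_p$, and clearly $\ker(\Lambda\to R/\PP)\subseteq x_f$ for every $f$. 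So it suffices to prove that the $x_f$ are Zariski dense in $\Spec\Lambda$, i.e.\ $\bigcap_f x_f=0$.

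First I would identify the points $x_f$. If $f$ has weight $n=n(f)$ then $\rho_f|_{G_{\Q_p}}\cong(\Ind_{G_K}^{G_{\Q_p}}\varepsilon_2^{\,n-1})\otimes\psi$, so $x_f\in\Spec\Rlocindpsi=\Spec\Lambda$ is the point at which the universal character restricted to $\OLKp=1+\m_K$ equals $z\mapsto\varepsilon_2(z)^{n-1}=\iota(z)^{n-1}$ (recall $\psi$ is unramified, hence trivial on $1+\m_K$, and $\iota\colon K\hookrightarrow\Qbar_p$). Fix a $\Z_p$-basis $\exp(p),\exp(p\omega)$ of $1+\m_K$ with $\{1,\omega\}$ a $\Z_p$-basis of $\OL_K$, identify $\Lambda=W(k)\llbracket T_1,T_2\rrbracket$ with $1+T_i$ the group-like element attached to the $i$-th basis vector, and write $\omega$ also for $\iota(\omega)\in W(k)$, whose reduction generates $\F_{p^2}$ over $\F_p$ and so does not lie in $\F_p$. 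Then $x_f$ sends $T_1\mapsto\exp(p(n-1))-1$, $T_2\mapsto\exp(p(n-1)\omega)-1$. Thus every $x_f$ lies on the analytic ``weight curve'' cut out by the $W(k)$-algebra map $\phi\colon\Lambda\to W(k)\langle s\rangle$ into the Tate algebra, $T_1\mapsto\exp(ps)-1$, $T_2\mapsto\exp(ps\omega)-1$ (both images have coefficients tending to $0$), via $x_f=\phi$ followed by $s\mapsto n(f)-1$.

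The crux is that $\phi$ is injective, i.e.\ this weight curve is Zariski dense. The derivation $\delta:=p(1+T_1)\partial_{T_1}+p\omega(1+T_2)\partial_{T_2}$ of $\Lambda$ satisfies $\phi\circ\delta=(d/ds)\circ\phi$ (check on $T_1,T_2$), so $\ker\phi$ is a $\delta$-stable prime ideal not containing $p$. I would then show the only $\delta$-stable prime of $\Lambda[1/p]$ is $0$: a maximal such prime would be a point of the open polydisc at which $\delta$ vanishes, impossible since $1+T_1$ and $1+T_2$ are units there; and a height-one such prime is $(g)$ with $\delta g=hg$, so along each integral curve of $\delta$ — a line $A_2=\omega A_1+c$ in the coordinates $A_i=\log(1+T_i)$ — the function $g$ is identically zero or nowhere zero, forcing $V(g)$ to be a union of such lines. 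But no such line is algebraic, since in the $T_i$-coordinates it is cut out by $(1+T_2)-e^c(1+T_1)^\omega$, and $(1+T_1)^\omega=\sum_k\binom{\omega}{k}T_1^k\notin\Lambda[1/p]$ because $v_p\!\big(\binom{\omega}{k}\big)=-v_p(k!)\to-\infty$ (here one uses $\overline{\omega}\notin\F_p$, i.e.\ $K\neq\Q_p$). Hence $\ker\phi[1/p]=0$, and as $\Lambda$ is a domain, $\ker\phi=0$.

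Finally, if $0\neq g\in\bigcap_f x_f$ then $\phi(g)\in W(k)\langle s\rangle$ vanishes at $s=n(f)-1$ for all $f$; since there are only finitely many eigenforms of each weight and level $N$, infinitely many weights occur, so $\phi(g)$ has infinitely many zeros in $\Z_p$ with a limit point, whence $\phi(g)=0$ and $g\in\ker\phi=0$, a contradiction. Therefore $\ker(\Lambda\to R/\PP)=0$ and $\mathrm{Supp}_\Lambda(R/\PP)=\Spec\Lambda$. The main obstacle is the injectivity of $\phi$: it is a $p$-adic transcendence statement that crucially exploits $K\neq\Q_p$ (making $(1+T_1)^\omega$ non-algebraic), and it is precisely the step through which the accumulation of the weights $n(f)$ in $\Z_p$ is used to force $R/\PP$ to be large.
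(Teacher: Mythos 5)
Your overall strategy — reformulate the claim as injectivity of the structure map $\Lambda\to R/\PP$, reduce to Zariski-density of the weight points, and package the weights into a single rigid-analytic map $\phi\colon\Lambda\to W(k)\langle s\rangle$ so that the accumulation of the integers $n(f)-1$ in $\Z_p$ forces $\bigcap_f x_f\subseteq\ker\phi$ — is a correct and rather elegant reorganization, genuinely different in presentation from the paper's proof (which works directly with a putative $F(X,Y)$ vanishing on the weight points and compares it against the series $H_m$ over the scaled rings $W(k)[\pi_m]\llbracket X/\pi_m,Y/\pi_m\rrbracket$). The identification of the points $x_f$, the verification $\phi\circ\delta=(d/ds)\circ\phi$, the elimination of maximal $\delta$-stable primes (a derivation of a finite extension of $\Q_p$ is zero, yet $\delta(T_1)=p(1+T_1)$ is a unit at every rigid point), and the final accumulation-of-zeros step are all correct. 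The underlying transcendence input, that $(1+T_1)^\omega=\sum\binom{\omega}{k}T_1^k$ has $v_p\bigl(\binom{\omega}{k}\bigr)=-v_p(k!)\to-\infty$ because $\bar\omega\notin\F_p$, is exactly the same computation as the paper's Sublemma~\ref{sublemma}.

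However, there is a genuine gap in the step that rules out a height-one $\delta$-stable prime $(g)$, which is where the whole weight of the argument sits. The sentence ``along each integral curve of $\delta$ the function $g$ is identically zero or nowhere zero, forcing $V(g)$ to be a union of such lines'' quietly uses two things that fail, or at least require substantial work, in the $p$-adic setting. First, the flow $s\mapsto(\exp(ps)(1+a)-1,\exp(p\omega s)(1+b)-1)$ through a point $(a,b)$ of the open polydisc only converges for $|s|$ below a radius depending on $(a,b)$, so there is no globally defined ``line'' $A_2=\omega A_1+c$ inside $\Spec\Lambda[1/p]$; one only gets small arcs, and a rigid-analytic curve can perfectly well be covered by arcs of distinct integral curves without being a union of global integral curves. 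Second, even granting that $V(g)$ is locally an integral-curve arc through $(0,0)$, what you actually obtain is that a certain formal substitution of $(1+T_1)^\omega-1$ into $g$ vanishes; deducing $g=0$ from this requires controlling the precise rate of coefficient growth of powers $((1+T_1)^\omega-1)^j$ and arguing that the unboundedness cannot cancel — this is not a soft consequence of ``$(1+T_1)^\omega\notin\Lambda[1/p]$.'' Making either point rigorous is precisely what the paper's argument does: the function $H_m=(1+X)^{\eta p^{m-1}}-(1+Y)^{p^{m-1}}$ is a renormalized version of your integral-curve equation that \emph{does} converge on the polydisc of radius $|\pi_m|$, and the irreducibility of $H_m$ in $W(k)[\pi_m]\llbracket X/\pi_m,Y/\pi_m\rrbracket$ (proved via the same coefficient-growth estimate) plus the finiteness of intersections of coprime series is what replaces your foliation picture with an actual divisibility $H_m\mid F$. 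So the idea is right and the mechanism you propose ($\delta$-stability) is a nice way to isolate the transcendence, but the crux — no height-one $\delta$-stable prime — is asserted rather than proved, and filling it in essentially reproduces the paper's Sublemma~\ref{sublemma} and the subsequent irreducibility and intersection arguments over the scaled rings.

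Two minor points you should also address if you pursue this route: (i) you need $(0,0)\in V(g)$ to start the integral-curve argument at a controlled point (this does follow, since $\ker\phi\subseteq(T_1,T_2)$, but say so), and (ii) $(0,0)$ need not be a smooth point of $V(g)$, so the local description of $V(g)$ as a single arc there requires an argument (e.g.\ pass to the normalization, or use that $\delta$ does not vanish at the origin so there is a unique formal branch tangent to $\delta$).
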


\begin{proof} The ring~$R/\PP$ is a finite~$\Lambda$ module by Lemma~\ref{finite}.
The support of a finite module is closed, and thus it suffices to show that the support
includes a Zariski dense subset of~$\Lambda$.
Since there are only finitely many Galois representations of any fixed weight, we may
assume that~$R/\PP$ has points~$\rho_f$ for modular eigenforms~$f$ of infinitely many different weights.
If~$f$ has weight~$n$, then we may explicitly write down the corresponding point of~$\Lambda$.
We make the explicit identification:
$${\OLKp} \simeq (1+p) \Z_p \oplus (1 + p)^{\eta} \Z_p,$$
where~$\eta = \sqrt{u}$ for any fixed non-quadratic residue~$u$. We may then take~$X = [1+p] - 1$
and~$Y = [(1+p)^{\eta}] - 1$. The classical modular forms we are considering
all correspond
to specializations where~$z \in \OL^{\times}$ maps to~$z^{n-1}$, or equivalently to 
\begin{equation}
\label{special}
X \mapsto (1+p)^{n-1} - 1, Y \mapsto  (1 + p)^{\eta (n-1)} - 1.
\end{equation}
It suffices to show that any infinite collection of these points are Zariski dense in~$\Lambda$.
The problem is that the Zariski closure is \emph{trying to be} given by the equation
\begin{equation} \label{equation:log} H =  \eta \log(1+X) - \log(1+Y) = 0, \end{equation}
but  this is not an element of~$\Lambda \otimes \Q_p$ because the denominators
grow without bound.  (Note that we have chosen the field~$k$ so that~$\eta \in \OL_K \subseteq W(k)$.)
Alternatively, the Zariski closure wants to be~$(1+X)^{\eta} - (1+Y) = 0$,
although the corresponding formal power series (unlike~$H$)  doesn't even converge
for all~$|X|, |Y| < 1$  as we shall see shortly in Sublemma~\ref{sublemma} below.

Suppose the Zariski closure of these points is given by the vanishing set of~$F(X,Y)$ in~$W(k) \llbracket X,Y \rrbracket$.
Choose a primitive $p^m$th root of unity~$\zeta_m$ for each~$m$ and let~$\pi_m = 1 - \zeta_m$.
There is an inclusion
$$F(X,Y) \in W(k) \llbracket X,Y \rrbracket \subset W(k)[\pi_m]\llbracket X,Y \rrbracket
\subset  W(k)[\pi_m]\llbracket X/\pi_m,Y/\pi_m \rrbracket,$$
which amounts to considering the restriction of functions bounded by~$1$ on the open unit
ball~$B(1)$   to functions bounded by~$1$ on the open ball~$B(\pi_m)$. (Here~$B(r)$ denotes
the open ball centered at the origin with radius~$|r|$.)

\begin{sublemma} \label{sublemma} Suppose that~$v(\eta) = 0$ but~$\eta \notin \Z_p$. Then
\begin{equation}
H_m:=(1+X)^{\eta p^{m-1}} - (1 + Y)^{p^{m-1}}.
\end{equation}
is an element of~$W(k)[\pi_m,Y]\llbracket X/\pi_m \rrbracket \subset W(k)[\pi_m]\llbracket X/\pi_m,Y/\pi_m \rrbracket$,
but is not an element of~$W(k)[\pi_{m+1}] \llbracket X/\pi_{m+1},Y/\pi_{m+1}\rrbracket \otimes \Q_p$.
\end{sublemma}

\begin{proof} It suffices to analyze the growth of the coefficient of~$X^n$ as~$n$ varies.
The coefficient is explicitly given by a binomial coefficient, and hence its valuation is
$$v  \left(\binom{ \eta p^{m-1}}{n}\right)   = 
  v \left( \prod_{i=0}^{n-1} (\eta p^{m-1} - i) \right) -  v(n!)$$
Our assumptions on~$\eta$ imply that the valuation of~$\eta p^{m-1} - i$ is~$v(i)$ when~$v(i) \le m -1$ and~$m-1$ otherwise.
Hence the valuation of the~$X^n$ coefficient is equal to
\begin{equation}
\label{valuation}
\sum_{k=1}^{m-1} \left(1 + \floor*{\frac{n-1}{p^k}} \right)  - \sum_{k=1}^{\infty} \floor*{\frac{n}{p^k}}.
\end{equation}
A lower bound for this expression is given by
$$
\sum_{k=1}^{m-1} \floor*{\frac{n}{p^k}}  - \sum_{k=1}^{\infty}
\floor*{\frac{n}{p^k}} =
 - \sum_{k=m}^{\infty}
\floor*{\frac{n}{p^k}}  \ge  - \sum_{k=m}^{\infty}
\frac{n}{p^k}  =- \frac{n}{p^{m-1} (p-1)},$$
whereas an upper bound when~$n = p^r$ and~$r \ge m-1$ is given by
$$
\sum_{k=1}^{m-1} \left(1 + \floor*{\frac{p^r-1}{p^k}} \right)  - \sum_{k=1}^{\infty}
\floor*{\frac{p^r}{p^k}}
=
- \sum_{k=m}^{r} \frac{p^r}{p^k} 
=-\frac{n}{(p-1) p^{m-1}}
\left(1 - \frac{1}{p^{r-m-1}} \right).$$
Since~$v(\pi_m) = 1/(p^{m-1}(p-1))$, the lower bound  implies that~$H_m \in W(k)[\pi_m,Y]\llbracket X/\pi_m \rrbracket$.
On the other hand, the upper bound shows that this cannot be improved.
\end{proof}

Now~$H_m$ also vanishes at all the weights~$z \mapsto z^{n-1}$. We
claim that~$H_m$ is irreducible in~$W(k)[\pi_m]\llbracket X/\pi_m,Y/\pi_m \rrbracket$.
Viewing~$H_m$  as an element inside the larger ring~$W(k)[\pi_m]\llbracket X/\pi_1,Y/\pi_1 \rrbracket$, we \emph{do} have
the factorization
$$H_m = \prod_{i=1}^{p^{m-1}} \left(1 + Y - (1 + X)^{\eta} \zeta^{i}_{m-1} \right).$$
Certainly any factorization over the smaller ring is thus promoted to a factorization
over~$W(k)[\pi_m,Y]\llbracket X/\pi_m \rrbracket$. But if there exists a factor with~$r < p^{m-1}$
terms, then the constant term considered as a polynomial in~$(Y+1)$ will be a non-zero multiple 
of~$(1 + X)^{\eta r}$, which does not lie in this ring by Sublemma~\ref{sublemma}, a contradiction.

Because~$H_m \in  W(k)[\pi_m]\llbracket X/\pi_m,Y/\pi_m \rrbracket$ is irreducible,
we deduce that~$F(X,Y)$ must vanish at all points in~$B(\pi_m)$ where~$H_m$ vanishes.
But then~$F(X,Y)$ must vanish at the finitely many pairs~$(\xi_1 - 1,\xi_2- 1)$ of~$p$-power  roots of unity
with~$v(\xi_i-1) > v(\pi_m)$.  But repeating this with~$m$ arbitrarily large implies that~$F(X,Y)$
vanishes at all such pairs of $p$-power roots of unity, which is impossible because they are Zariski dense in~$\Lambda$.
\end{proof}

We note in passing that (formally)
$\displaystyle{\lim_{m \rightarrow \infty} \frac{H_m(X,Y)}{p^{m-1}} = H(X,Y)}$.

\begin{lemma} If a component~$R/\PP$ has infinitely many points which correspond to modular 
Galois representations which are not CM,
then the support of CM points on~$R/\PP$ is either empty or lies on a proper closed subscheme of~$\Lambda$.
\end{lemma}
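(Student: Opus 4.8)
The plan is to realise the CM locus on~$R/\PP$ as a union of finitely many \emph{proper} closed subschemes of~$\Spec(R/\PP)$ and then to push this union down to~$\Lambda$ by a dimension count.

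By Lemma~\ref{CM} there are at most three quadratic fields~$F/\Q$ from which~$\rhobar$ is induced, and for each such~$F$ the functor~$D^{\CM,F}$ is pro-represented by a quotient~$\RCM_F = R/J_F$ of~$R$; by Remark~\ref{remark:orphan} every CM modular form of level~$N$ gives rise to a point of~$\RCM_F$ for one of those~$F$ which is imaginary. Hence the support of CM points on~$R/\PP$ is contained in
\[ Z \ :=\ \bigcup_{F} V(\PP + J_F) \ \subseteq\ \Spec(R/\PP), \]
the union taken over the (finitely many) imaginary quadratic fields~$F$ from which~$\rhobar$ is induced. If there are no such~$F$, the CM support is empty and we are done; so assume otherwise.

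I would then show that each~$V(\PP + J_F)$ is a proper closed subscheme of~$\Spec(R/\PP)$, i.e.\ that~$J_F \not\subseteq \PP$. By hypothesis~$R/\PP$ carries a modular point~$x = \rho_f$ with~$f$ \emph{not} CM, and~$\rho_f$ is~$G_{\Q}$-irreducible, being attached to a cuspidal eigenform of weight~$\ge 2$. If~$x$ lay on~$V(J_F)$, then~$\rho_f|_{G_F}$ would split as a sum of two characters which, being residually distinct (as~$\Ubar \not\cong \Ubar'$), are non-isomorphic; an irreducible representation whose restriction to an index-two subgroup splits into two distinct characters is induced from that subgroup, so~$\rho_f \cong \Ind^{G_{\Q}}_{G_F}\chi$, contradicting that~$f$ is not CM (here we use that~$F$ is imaginary). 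Hence~$x \notin V(J_F)$, so~$V(\PP) \not\subseteq V(J_F)$ and~$J_F \not\subseteq \PP$. Since~$\PP$ is a minimal prime of~$R$, the scheme~$\Spec(R/\PP)$ is irreducible, so each~$V(\PP+J_F)$, and therefore~$Z$, has dimension strictly less than~$\dim(R/\PP)$.

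Finally I would descend to~$\Lambda$. In particular~$R/\PP$ has infinitely many modular points, so by the preceding lemma its support over~$\Lambda := W(k)\llbracket \OLKp \rrbracket$ is all of~$\Lambda$; together with Lemma~\ref{finite} this makes~$\Lambda \hookrightarrow R/\PP$ a finite integral extension of domains, so that~$\Spec(R/\PP) \to \Spec\Lambda$ is finite and surjective and~$\dim(R/\PP) = \dim\Lambda = 3$. A finite morphism is closed and preserves dimension, so the image of~$Z$ is a closed subset of~$\Spec\Lambda$ of dimension~$\le \dim Z \le 2$; as~$\Lambda$ is an integral domain of dimension~$3$, this image lies on a proper closed subscheme of~$\Lambda$, as claimed. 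I do not expect a genuine obstacle here: the argument is formal once Lemma~\ref{CM}, Lemma~\ref{finite} and the preceding lemma are in hand. The one point to watch is that ``CM'' must mean ``induced from an \emph{imaginary} quadratic field'' (Remark~\ref{remark:orphan}): this is precisely what forces~$V(\PP+J_F)$ to be proper once a single non-CM modular point is known to lie on~$R/\PP$.
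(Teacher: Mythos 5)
Your argument is correct and follows the same route as the paper: realize the CM locus inside the finitely many proper closed subschemes $V(\PP + J_F) = \Spec(R/\PP\otimes_R \RCM_F)$ from Lemma~\ref{CM}, use irreducibility of $R/\PP$ to force positive codimension, and then push down along the finite map to $\Lambda$ so that dimension of the quotient equals dimension of its support. The one place you are slightly more explicit than the paper is in justifying that $J_F\not\subseteq\PP$: you spell out that a non-CM modular point on $V(J_F)$ would, since $\Ubar\not\cong\Ubar'$ and $\rho_f$ is irreducible, be induced from the imaginary field $F$ and hence CM; the paper compresses this to ``contradicts the assumption.'' That is a harmless unpacking of the same contradiction, not a different proof.
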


\begin{proof}
If there are no CM points the result is immediate.
If~$R/\PP$ has a single point which admits CM by~$F/\Q$, then certainly~$\rhobar$ is induced from~$F$.
By Lemma~\ref{CM}, there are at most three such fields~$F$, and it suffices to prove the lemma for any given~$F$.
In particular, the points with CM by~$F$
all give rise
to points on~$\RCM_F$ and hence on the intersection~$R/\PP \otimes_R \RCM_F$.
Either this is all of~$R/\PP$, which contradicts the assumption, or, because~$R/\PP$
is irreducible, it has positive co-dimension. But since~$R$ is finite over~$\Lambda$, the dimension
of~$R$ and any of its quotients coincides with the dimension of its support, and hence we are done.
\end{proof}

To complete the proof of Theorem~\ref{theorem:main}, it suffices to show that
if the support of~$R/\PP$ is all of~$\Lambda$, then~$R/\PP$
contains a Zariski dense set of points which are CM. To this end, we use a variation
of the idea of Ghate--Vatsal~\cite{GV} to prove local indecomposability of non-CM Hida families
by specializations in weight one. 
Consider points in~$\Lambda$
corresponding to maps~${\OLKp} \rightarrow \Qbar^{\times}_p$ with finite image and such
that the ratio of any such map to its~$\Gal(K/\Q_p)$-conjugate has order greater than~$60$. 
These points are clearly Zariski dense in~$\Lambda$. The assumption that~$R/\PP$ has full support
means that for any such specialization we obtain corresponding global Galois representations:
$$\rho: G_{\Q} \rightarrow \GL_2(\Qbar_p)$$
which locally at~$p$ 
has finite image on inertia at~$p$. 
Thus, by~\cite[Thm.0.2]{PilloniStroh}
when~$\rhobar|_{G(\Q(\zeta_p))}$ is irreducible and by~\cite[Thm.1]{Sasaki} otherwise (noting that we
may assume that~$p \equiv 1 \mod 4$ and hence~$\rhobar$ is induced from a real quadratic extension), it follows that~$\rho$
is modular of weight one.
 By construction,
the image of inertia in the corresponding projective representation has order greater than~$60$ (given our choice
 of point of~$\Lambda$), and hence the global
projective representation also has order greater than~$60$.
This ensures that the projective image is not of exceptional type ($A_4$, $S_4$, or~$A_5$).
It follows that~$\rho$ must be of dihedral type. Exactly as in~\cite{GV}, all but finitely many of these forms
must additionally be of CM type. But then we have produced a Zariski
 dense set of CM points on~$R/\PP$, a contradiction, and we are done.

 \begin{remark} The fact that any infinite set of characters of the form $z \mapsto z^n$
are Zariski dense in some irreducible component of~$\Lambda = \Spec(\mathbf{Z}_p \llbracket \OL^{\times}_K \rrbracket)$ can be viewed as a special case
of a local $p$-adic analogue of Lang's       
conjecture~\cite{Lang} (See also~\cite{Serban}). The classical analogue of our example is the statement that any
infinite set of points~$(\exp(\eta x),\exp(x))$ are Zariski dense
in~$(\C^{\times})^2$ whenever~$\eta \not\in \Q$.
\end{remark}

 \subsection{Proof of Corollary~\ref{cor:levelone}}
 \label{pcanbeeven}
 If~$f$ is a form of level~$1$ which is CM, then the corresponding
 automorphic representation is induced from a character on some imaginary quadratic field~$K/\Q$.
 But then the level of~$f$ will be divisible by any prime dividing the discriminant of~$K$, a contradiction.
 Hence
 Corollary~\ref{cor:levelone} follows immediately for all~$p > 2$. For~$p = 2$, we prove directly
 that if~$a_2(f) = 0$ then~$\rho_f$ is dihedral, and so~$f$ is CM, from which the result follows by the argument above.
 Note that for~$N = 1$ and~$p = 2$ the representation~$\rhobar$ will have trivial
 semi-simplification (see~\cite[Lemme~1.7]{Chenevier}), and it follows that the image of~$\rho_f$ factors
 through the maximal pro-$2$ extension of~$\Q$ unramified outside~$2$.
 By~\cite[Prop~1.8]{Chenevier}, it follows that the image of~$\rho_f$ is isomorphic
 to the image of~$\rho_f$ restricted to inertia at~$2$. But the assumption that~$a_2(f) = 0$
 implies that~$\rho_f$ is locally induced, which now implies it is also globally induced,
 and thus CM.

\section{Acknowledgments}
This paper owes its genesis to two conversations between the authors during their respective number
theory seminars,
one in Wisconsin in February of 2018, and the second in Chicago in January of 2020. 
 The first author
would like to thank George Boxer from whom he learned the  surprising fact that the characters~$z \rightarrow z^n$
are Zariski dense in~$\mathbf{Z}_p \llbracket \OL^{\times}_K \rrbracket$, and both authors would like to thank
Patrick Allen and Toby Gee for comments.   The second author would like to thank  Professors Nigel Boston, Jordan Ellenberg, Lue Pan and Richard Taylor    for their comments on the earlier versions of this work and also he would like to thank
 the hospitality of the  department of mathematics of University of Chicago.  

 \appendix
 
 \section{Finiteness of unramified deformation rings}

Let~$\rhobar: G_{\Q} \rightarrow \GL_2(k)$
be an absolutely irreducible odd Galois representation of the form~$\Ind^{G_{\Q}}_{G_L} \chi$, 
where~$L/\Q$ is the quadratic subfield~$L \subset \Q(\zeta_p)$.
Suppose that,
up to unramified twist:
\begin{equation}
\label{type}
\left. \rhobar  \right|_{G_{\Q_p}}  = \Ind^{G_{\Q_p}}_{G_{K}} \varepsilonbar^{n-1}_2, \quad n-1 \equiv \frac{p+1}{2} \bmod p+1.
\end{equation}

The main theorem of this section is the following.

\begin{theorem} \label{thm:append}
Assume that~$p \equiv 1 \mod 4$,  so~$L/\Q$ is real.
Let~$(N,p) = 1$, and let~$\Rsp$ denote the universal
deformation ring of~$\rhobar$ consisting of representations which
are unramified outside~$N$ and totally split at~$p$ defined in Definition~\ref{splitdefinition}. Then~$\Rsp$
is finite over~$W(k)$.
\end{theorem}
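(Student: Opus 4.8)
The plan is to exploit the rigidity of the ``split at $p$'' condition together with the dihedral structure of $\rhobar$, reducing the finiteness of $\Rsp$ to the nonexistence of a mod~$p$ family and then to a weight one modularity lifting input in the residually dihedral case. First I would reduce modulo $p$: since $\Rsp$ is a complete Noetherian local $W(k)$-algebra with residue field $k$, the topological Nakayama lemma shows $\Rsp$ is module-finite over $W(k)$ if and only if $\Rsp/p$ is finite-dimensional over $k$; equivalently, there is no nonconstant continuous $\rho\colon G_{\Q}\to\GL_2(k\llbracket x\rrbracket)$ deforming $\rhobar$, unramified outside $N$, and split at $p$ in the sense of Definition~\ref{splitdefinition}. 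A preliminary twist further reduces us to deformations of fixed determinant, governed by the trace-zero adjoint $\mathrm{ad}^0\rhobar$, at the cost of a finite extension.

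Next I would record the local rigidity. The image of $\rhobar|_{G_{\Q_p}}$ has order prime to $p$, so a deformation split at $p$ is not merely unramified at $p$ but is \emph{trivial on a full decomposition group} at $p$. Writing $\widetilde M/\Q$ for the fixed field of $\ker\rhobar$ --- whose completion above $p$ is the fixed field of $\ker(\rhobar|_{G_{\Q_p}})$ --- any deformation $\rho$ as above restricts on $G_{\widetilde M}$ to a pro-$p$ representation unramified outside the primes above $N$ and \emph{completely split} above $p$; in particular the cyclotomic $\Z_p$-direction, being ramified at $p$, is excluded. For the CM part of the problem --- deformations induced from $L$, hence governed by deformations of the character $\chi$ of $G_L$ with fixed norm --- this forces the deformed character to be unramified outside the primes of $L$ above $N$, so the CM locus of $\Rsp$ is pro-represented by $W(k)\llbracket\Gamma\rrbracket$ where $\Gamma$ is a ray class group of $L$ of conductor prime to $p$ and split above $p$; since $p\nmid N$ the only source of positive $\Z_p$-rank (the local units above $p$) is absent, so $\Gamma$ is finite and the CM locus is finite over $W(k)$ by class field theory. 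The same bound applies to each of the (at most three, by Lemma~\ref{CM}) CM fields.

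The remaining, and main, point is to control the non-CM deformations and --- crucially --- to upgrade the automatic finiteness of the relevant Selmer groups to finiteness of the ring itself, ruling out a nonconstant mod~$p$ family concentrated away from the CM locus. Because $p\equiv 1\bmod 4$ and $L=\Q(\sqrt p)\subset\Q(\zeta_p)$ is real quadratic, $\rhobar|_{G_{\Q(\zeta_p)}}$ is reducible (it is a sum of characters over $G_L\supseteq G_{\Q(\zeta_p)}$), so the usual Taylor--Wiles method fails: this is exactly the residually dihedral obstruction. I would finish with a weight one modularity lifting argument valid in this setting (the residually dihedral weight one results already invoked in the body of the paper; cf.~\cite{Sasaki}): every $\Qbar_p$-point of $\Rsp$ is irreducible, odd, and geometric with Hodge--Tate weights $\{0,0\}$ at $p$, hence attached to a weight one eigenform of level $N$, and the accompanying patching argument shows $\Rsp$ is finite over $W(k)$ --- indeed $\Rsp\twoheadrightarrow\T$, the weight one Hecke algebra of level $N$ attached to $\rhobar$, with $\Rsp$ finite over $\T$. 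Since the space of weight one modular forms of level $N$ and its mod~$p$ analogue are finite-dimensional --- sections of a coherent sheaf on the proper modular curve $X_1(N)$ --- the algebra $\T$ is finite over $W(k)$, and hence so is $\Rsp$. I expect the genuine obstacle to be making this patching run with $\rhobar$ dihedral: one must choose the Taylor--Wiles auxiliary primes compatibly with the dihedral structure (in the spirit of the work of Skinner and Wiles), and here it is essential both that $L$ is \emph{real} --- so that no extra CM deformations arise from a $\Z_p$-extension ramified at the primes of $L$ above $p$ --- and that $(N,p)=1$. (A direct Galois-cohomological computation via the decomposition $\mathrm{ad}^0\rhobar$ into an induced piece and a quadratic character bounds the Selmer groups by finite ray class quantities, but this only controls the tangent space; passing from there to finiteness of the ring appears to require the modularity input.)
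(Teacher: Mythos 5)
Your overall strategy --- reduce to finiteness of $\Rsp/p$, dispose of the CM locus by class field theory, and control the rest via a residually dihedral modularity lifting theorem producing a finite $\Rsp \twoheadrightarrow \T$ with $\T$ a weight one Hecke algebra --- is a reasonable plan, and you correctly identify the residually dihedral patching as the obstacle. But that obstacle is precisely where your argument has a genuine gap, and it is not filled by the citation to Sasaki. The results of Sasaki (and likewise Pilloni--Stroh) are \emph{modularity lifting} statements for $\overline{\Q}_p$-valued points: they show that a geometric Galois representation with appropriate local behaviour at~$p$ arises from a weight one eigenform. What you need, and what you assert without justification, is the much stronger ring-theoretic statement that $\Rsp$ is \emph{finite over}~$\T$; modularity of $\overline{\Q}_p$-points alone says nothing about components of~$\Rsp$ supported purely in characteristic~$p$, which is exactly the scenario one must exclude. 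Since the Taylor--Wiles hypothesis $\rhobar|_{G_{\Q(\zeta_p)}}$ absolutely irreducible fails here by design, the usual Calegari--Geraghty style coherent-cohomology patching for weight one forms does not run, and you have not supplied a substitute. Your parenthetical comment --- that Selmer group bounds ``only control the tangent space'' and ``passing from there to finiteness of the ring appears to require the modularity input'' --- correctly diagnoses the difficulty, but then the proposal simply asserts the needed conclusion rather than deriving it.

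The paper's proof circumvents this in a materially different way, which is worth contrasting with your plan. Rather than attempt a weight one $R=\T$ argument over~$\Q$ in the residually dihedral case, the paper performs a solvable base change (via Lemma~\ref{eandf} and Moret-Bailly/Calegari) to a totally real field~$F$ in which every place~$v|p$ has $F_v \simeq K$, so that $\rhobar|_{G_{F_v}}$ becomes \emph{reducible and $p$-distinguished}. This puts one squarely in the nearly-ordinary framework of Skinner--Wiles~\cite{SW}, where $R_{\DD}$ is a deformation ring over an Iwasawa algebra~$\Lambda$ (weight space) and the residually-reducible pro-modularity result of Skinner--Wiles gives $R_{\DD}^{\mathrm{red}} = \T_{\DD}^{\mathrm{red}}$ and hence finiteness of $R_{\DD}$ over~$\Lambda$. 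The key observation is then that the totally split condition forces the structure map $\Lambda \to \Rsp$ to factor through~$W(k)$ (all split deformations sit in the same fixed unramified weight), so finiteness of $R_{\DD}$ over~$\Lambda$ descends to finiteness of~$\Rsp$ over~$W(k)$. In other words, the paper deliberately trades the hard weight-one problem for a $\Lambda$-adic one where the residually reducible modularity machinery already exists, then uses the rigidity of the split condition to collapse the Iwasawa variable. Your approach, by contrast, would require proving a new residually dihedral weight one finiteness result over~$\Q$ (an analogue of Allen--Calegari~\cite{AC} without the Taylor--Wiles hypothesis), which is a substantial piece of work not available off the shelf. The CM/ray-class portion of your argument is fine but is not where the difficulty lies.
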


Before proving this, we begin with a preliminary lemma:

\begin{lemma} \label{eandf} There exists a finite extension~$F/\Q$
with the following properties
\begin{enumerate}
\item $F$ is totally real.
\item If~$v|N$, then~$\rhobar |_{F_v}$ is trivial.
\item $\rhobar |_{G_F}$ is absolutely irreducible.
\item If~$v|p$, then~$F_v \simeq K$, the unique unramified quadratic extension of~$\Q_p$.
\end{enumerate}
\end{lemma}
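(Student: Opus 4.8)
The plan is to construct $F$ as a compositum of local conditions imposed at the finitely many bad places, using weak approximation for number fields together with a controlled choice of splitting behaviour at $p$ and at the primes dividing $N$. First I would fix, for each prime $v \mid Np$, a local field extension of $\Q_v$ realizing the desired property: at $v = p$ I take the unramified quadratic extension $K/\Q_p$ (this is condition (4)), and at each $v \mid N$ I take a finite extension $W_v/\Q_v$ large enough that $\rhobar|_{G_{W_v}}$ is trivial (possible since $\rhobar|_{G_{\Q_v}}$ has finite image). Since the property ``$\rhobar|_{F_v}$ is trivial'' only depends on the completion $F_v$ up to the relevant finite quotient, it suffices to arrange that $F_v$ contains (a copy of) $W_v$ for each $v \mid N$ and that $F_v \cong K$ for each $v \mid p$.

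The key step is then to produce a \emph{totally real} number field with these prescribed completions. I would first pick a totally real field $F_0$ whose completion at each $v \mid Np$ realizes the required local extension — for instance by Krasner/weak approximation, choose a polynomial over $\Q$ that is close $p$-adically to a defining polynomial for $K$, close $v$-adically (for $v \mid N$) to a defining polynomial for $W_v$, and has all real roots at the archimedean place; Hilbert irreducibility or a direct approximation argument gives such an $F_0$ that is totally real. (Alternatively, realize each local condition inside a cyclotomic or abelian field and take the compositum.) This yields a totally real $F$ satisfying (1), (2), (4). For (3), absolute irreducibility of $\rhobar|_{G_F}$: since $\rhobar = \Ind^{G_\Q}_{G_L}\chi$ with $L \subset \Q(\zeta_p)$ the real quadratic subfield, $\rhobar|_{G_F}$ is reducible precisely when $L \subseteq F$ (so that the induction becomes a sum of characters) — more precisely, by Lemma~\ref{metacyclic} the representation $\rhobar|_{G_{\Q_p}}$ is induced from at most the fields $K$, $L$, and possibly one more when the projective image is $(\Z/2\Z)^2$, so it suffices to choose $F$ so that $F \not\supseteq L$ and more generally $G_F$ is not contained in any of the (at most three) index-two subgroups cutting out a sub-representation. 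Since $F_v \cong K$ for $v \mid p$, the decomposition group at such $v$ already surjects onto $\Gal(L/\Q)$, which forces $L \not\subseteq F$; a small additional local adjustment at an auxiliary prime handles the remaining at-most-two index-two subgroups if necessary.

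The main obstacle I expect is the simultaneous enforcement of ``totally real'' with the prescribed non-archimedean completions while keeping $\rhobar|_{G_F}$ absolutely irreducible: the first two are a routine weak-approximation / Krasner argument, but one must be careful that the field chosen to fix completions at $Np$ does not inadvertently contain $L$ or another of the exceptional index-two subfields through which $\rhobar$ factors as an induced representation. The clean way around this is to exploit condition (4) itself: because $F$ has a place $v \mid p$ with $F_v \cong K$ (unramified over $\Q_p$), the extension $F/\Q$ is unramified at $p$ whereas $L/\Q$ is ramified at $p$, so $L \subseteq F$ is impossible; the other exceptional subfields (which exist only in the $D_4$ case) can be avoided by prescribing the splitting type of $F$ at one further auxiliary prime, which does not interfere with conditions (1), (2), (4). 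Assembling these choices via weak approximation over $\Q$ produces the desired $F$.
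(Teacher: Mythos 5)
Your approach (build $F$ directly by Krasner/weak approximation plus a by-hand irreducibility check) differs from the paper, which simply invokes a Moret--Bailly--type existence theorem, Proposition~3.2 of~\cite{CEven} (see also~\cite{MB}), with input data $G = \GL_2(k)$, $\phi_v = \rhobar|_{G_v}$ for $v\mid N$, $\phi_p$ an injection $\Gal(K/\Q_p)\hookrightarrow G$, and trivial $c_\infty$; absolute irreducibility over $F$ then comes for free by additionally requiring $F$ linearly disjoint from the fixed field $M$ of $\ker\rhobar$, which forces $\rhobar(G_F) = \rhobar(G_\Q)$. Your elementary route is viable, but two points need care. First, you should keep in mind that arranging $F_v\simeq K$ at \emph{every} $v\mid p$ (not just one) requires approximating a product of close-but-distinct defining polynomials for $K$ rather than a power of one, or else appealing to the packaged result; Krasner does not apply to a polynomial with repeated roots. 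Second, and more substantively, your characterization ``$\rhobar|_{G_F}$ is reducible precisely when $L\subseteq F$'' is not correct as stated: since $\rhobar$ has dihedral projective image $D_{2r}$, the projective image of $G_F$ could a priori be a cyclic group $\langle\tau\rangle\simeq\Z/2$ generated by a \emph{reflection}, which is not contained in the rotation subgroup $C_r$ (so $L\not\subseteq F$) and yet still gives a reducible $\rhobar|_{G_F}$ (the preimage of a cyclic subgroup of $D_{2r}$ in $\rhobar(G_\Q)$ is abelian since everything has order prime to $p$). What saves you is precisely condition~(4): $\rhobar|_{G_{\Q_p}}$ is absolutely irreducible, so $\rhobar(G_K) = \rhobar(G_{F_v})$ has \emph{nontrivial} cyclic projective image inside $C_r$. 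Hence the projective image of $G_F$ contains a nontrivial rotation, and $L\not\subseteq F$ supplies a reflection; together they generate a noncyclic (dihedral) subgroup, giving irreducibility. In particular, once you observe this, the ``additional auxiliary prime'' you invoke to handle the $D_4$ case is unnecessary --- the same argument covers it, since a subgroup of $(\Z/2)^2$ containing a fixed nontrivial $\Z/2$ and not contained in it must be all of $(\Z/2)^2$. You also conflate the local and global applications of Lemma~\ref{metacyclic} (the local representation $\rhobar|_{G_{\Q_p}}$ is induced from local quadratic extensions $K$ and the ramified $\Q_p(\zeta_p)^+$, whereas the global $\rhobar$ is induced from global quadratic fields $L$, etc.); these are separate statements and should not be mixed. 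With those corrections your route works, but the linear-disjointness criterion used in the paper is both cleaner and robust to these pitfalls.
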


\begin{proof}  The existence of~$F$
follows immediately from~\cite[Proposition 3.2]{CEven}  (see also~\cite{MB}). 
For example, one may can choose~$G=\GL_2(k)$, and then make the following choices:
\begin{enumerate}
\item $\phi_v$ for~$v|N$ is  the map~$\rhobar |_{G_{v}} \rightarrow G$,
\item $\phi_v$ for~$v = p$ is any injective map~$\Gal(K/\Q_p) \rightarrow G$,
\item $c_{\infty}$ is trivial.
\end{enumerate}
The irreducibility of~$\rhobar |_{G_F}$ is
then guaranteed by choosing~$F/\Q$ to be linearly disjoint from the fixed field~$M$ of~$\ker(\rhobar)$ 
by~\cite[Lemma 3.2]{CEven} (2).
\end{proof}

\begin{proof}[Proof of Theorem~\ref{thm:append}]
We begin with some reductions. 
If~$F/E$ is any finite extension so that~$\rhobar |_{G_F}$ remains absolutely irreducible, and~$R_F$ and~$R_E$ denote the universal deformation rings of~$\rhobar |_{G_F}$
and~$\rhobar |_{G_E}$ respectively, then the map~$R_F \rightarrow R_E$ is always finite. 
As a consequence, to prove the finiteness of~$\Rpsi$, it
suffices to replace~$\Q$ by any totally real field in which~$\rhobar$ remains absolutely irreducible.
We  replace~$\Q$ by the field~$F/\Q$ constructed in Lemma~\ref{eandf}, so that~$\rhobar |_{G_v}$ is trivial for each~$v|N$.
By Lemma~\ref{eandf}, the field~$F_v$ for~$v|p$ is precisely the unramified extension~$K/\Q_p$
for all~$v|p$. In particular,
$$\rhobar |_{F_v} \simeq \varepsilonbar^{n-1}_2 \oplus
 \varepsilonbar^{p(n-1)}_2$$
 is reducible and~$p$-distinguished. Make an arbitrary choice of one of these characters for each~$v|p$,
 which gives a distinguished choice~$U_k$ of the decomposition~$V_k = U_k \oplus U'_k$ as~$G_{F_v}$ modules
 for each~$v|p$.

We now recall the deformation ring~$R_{\DD}$ defined in~\cite[\S2]{SW}
with respect to~$\DD = (W(k),\Sigma,\emptyset)$ where~$\Sigma$ is the set of primes dividing~$N$.
Note that~$\rhobar |_{G_F}$ is absolutely irreducible and satisfies all the conditions of~\cite{SW} by construction.
The ring~$R_{\DD}$  is  global deformation ring of~$\rhobar$ unramified outside~$Np$ subject
to the following condition:  for all~$v|p$, there exists a short exact sequence
$$0 \rightarrow U_A \rightarrow  V_A \rightarrow U'_A \rightarrow 0$$
of~$G_{F_v}$-modules where~$U_A$ and~$U'_A$ are free over~$A$ of rank one and~$U_A/\m = U_k$.

After extending~$F$ if neccessary we may assume that~$[F:\Q]$ is even and thus
the hypothesis ($\mathrm{H}_{\mathrm{even}}$) of~\cite[\S3]{SW} holds.
By~\cite[Lemma~5.1.2]{Allen}, 
there exists a  lift~$\rho_0$ of~$\rhobar$ giving rise to a point on~$R_{\DD}$
  relative to our choices both over~$F$ and any totally
real extension of~$F$ in which~$\rhobar |_{G_F}$ remains irreducible. This implies that
condition ($\mathrm{H}_{\mathrm{def}}$)
of~\cite[\S3]{SW} holds for any such~$F$.
In this situation we have a corresponding Hecke ring~$\T_{\DD}$ as
defined in~\cite[p.196]{SW}, and a surjection~$R_{\DD} \rightarrow \T_{\DD}$
of~$\Lambda$-modules where~$\Lambda$ is an Iwasawa algebra~\cite[p.191]{SW} (denoted~$\Lambda_{\OL}$).
There is a surjection~$R_{\DD} \rightarrow \Rpsi$. Any deformation coming from~$\Rpsi$ 
locally has the
form~$U_A \oplus U'_A$ where the action of~$G_K$ on~$U_A$ and~$U'_A$ is via the
Teichm\"{u}ller lifts of~$\varepsilonbar^{n-1}_2$ and~$\varepsilonbar^{p(n-1)}_2$ respectively.
In particular, the map
$$\Lambda \rightarrow R_{\DD} \rightarrow \Rpsi$$
 factors through a quotient of the form~$W(k)$,
as can be seen from the formulas in~\cite{SW} on the last line of p.191 and the first line of p.192 respectively.
A more intrinsic way to see this is that the ring~$\Lambda$ represents weight space and all split representations lie in the same fixed unramified weight.
 Hence to prove that~$\Rpsi$
is finite over~$W(k)$ it suffices to show that~$R_{\DD}$ is finite over~$\Lambda$.

By taking the compositum of~$F$ with a suitably large totally subfield of a cyclotomic extension (exactly
as in the first two lines at the top of page~204 of~\cite{SW}) we may further ensure that the pair~$(F,\rho_0)$
is \emph{good} in the sense of~\cite[\S4]{SW}. It folows from~\cite[Prop~4.1]{SW} and~\cite[Prop~8.2]{SW}
that all primes of~$R_{\DD}$ are pro-modular. If~$\T_{\DD}$ is the Hecke ring defined 
in~\cite[p.196]{SW}, it follows that there is an isomorphism~$R_{\DD}/\p \simeq \T_{\DD}/\p$ for
every prime~$\p$ of~$R_{\DD}$, which implies immediately
that~$(R_{\DD})^{\red} = (\T_{\DD})^{\red}$ and hence~$(R_{\DD})^{\red}$ is finite over~$\Lambda$
(since~$\T_{\DD}$ is finite over~$\Lambda$ by~\cite[Lemma~3.3]{SW}). Since~$R_{\DD}$ is Noetherian, it follows
that~$R_{\DD}$ is also finite over~$\Lambda$.
 But~$R_{\DD}$ surjects onto~$\Rpsi/p$, and the kernel contains the image of the maximal ideal of~$\Lambda$.
It follows that~$\Rpsi/p$ is finite over~$k$ and hence~$\Rpsi$
 is also finite over~$W(k)$, as claimed.
\end{proof}

\bibliographystyle{alpha}
\bibliography{CM}

\begin{thebibliography}{{LMF}20}

\bibitem[AC14]{AC}
Patrick~B. Allen and Frank Calegari.
\newblock Finiteness of unramified deformation rings.
\newblock {\em Algebra Number Theory}, 8(9):2263--2272, 2014.

\bibitem[All14]{Allen}
Patrick~B. Allen.
\newblock Modularity of nearly ordinary 2-adic residually dihedral {G}alois
  representations.
\newblock {\em Compos. Math.}, 150(8):1235--1346, 2014.

\bibitem[B{\"o}c13]{Bockle}
Gebhard B{\"o}ckle.
\newblock Galois representations.
\newblock In {\em Travaux math\'{e}matiques. {V}ol. {XXIII}}, volume~23 of {\em
  Trav. Math.}, pages 5--35. Fac. Sci. Technol. Commun. Univ. Luxemb.,
  Luxembourg, 2013.

\bibitem[Bre03]{Breuil}
Christophe Breuil.
\newblock Sur quelques repr\'{e}sentations modulaires et {$p$}-adiques de
  {${\rm GL}_2(\bold Q_p)$}. {II}.
\newblock {\em J. Inst. Math. Jussieu}, 2(1):23--58, 2003.

\bibitem[Cal12]{CEven}
Frank Calegari.
\newblock Even {G}alois representations and the {F}ontaine--{M}azur conjecture.
  {II}.
\newblock {\em J. Amer. Math. Soc.}, 25(2):533--554, 2012.

\bibitem[Che08]{Chenevier}
Ga\"{e}tan Chenevier.
\newblock Quelques courbes de {H}ecke se plongent dans l'espace de {C}olmez.
\newblock {\em J. Number Theory}, 128(8):2430--2449, 2008.

\bibitem[Duk95]{Duke}
W.~Duke.
\newblock The dimension of the space of cusp forms of weight one.
\newblock {\em Internat. Math. Res. Notices}, (2):99--109, 1995.

\bibitem[Gou01]{Gouvea}
Fernando~Q. Gouv\^{e}a.
\newblock Deformations of {G}alois representations.
\newblock In {\em Arithmetic algebraic geometry ({P}ark {C}ity, {UT}, 1999)},
  volume~9 of {\em IAS/Park City Math. Ser.}, pages 233--406. Amer. Math. Soc.,
  Providence, RI, 2001.
\newblock Appendix 1 by Mark Dickinson, Appendix 2 by Tom Weston and Appendix 3
  by Matthew Emerton.

\bibitem[GV04]{GV}
Eknath Ghate and Vinayak Vatsal.
\newblock On the local behaviour of ordinary {$\Lambda$}-adic representations.
\newblock {\em Ann. Inst. Fourier (Grenoble)}, 54(7):2143--2162 (2005), 2004.

\bibitem[Joc82]{Joch}
Naomi Jochnowitz.
\newblock Congruences between systems of eigenvalues of modular forms.
\newblock {\em Trans. Amer. Math. Soc.}, 270(1):269--285, 1982.

\bibitem[Lan83]{Lang}
Serge Lang.
\newblock {\em Fundamentals of {D}iophantine geometry}.
\newblock Springer-Verlag, New York, 1983.

\bibitem[Leh47]{MR21027}
D.~H. Lehmer.
\newblock The vanishing of {R}amanujan's function {$\tau(n)$}.
\newblock {\em Duke Math. J.}, 14:429--433, 1947.

\bibitem[{LMF}20]{lmfdb}
The {LMFDB Collaboration}.
\newblock The {L}-functions and modular forms database, 2020.
\newblock [Online; accessed 31 July 2020].

\bibitem[Maz89]{Mazur}
B.~Mazur.
\newblock Deforming {G}alois representations.
\newblock In {\em Galois groups over {${\bf Q}$} ({B}erkeley, {CA}, 1987)},
  volume~16 of {\em Math. Sci. Res. Inst. Publ.}, pages 385--437. Springer, New
  York, 1989.

\bibitem[MB90]{MB}
Laurent Moret-Bailly.
\newblock Extensions de corps globaux \`a ramification et groupe de {G}alois
  donn\'{e}s.
\newblock {\em C. R. Acad. Sci. Paris S\'{e}r. I Math.}, 311(6):273--276, 1990.

\bibitem[PS16]{PilloniStroh}
Vincent Pilloni and Beno\^{\i}t Stroh.
\newblock Surconvergence, ramification et modularit\'{e}.
\newblock {\em Ast\'{e}risque}, (382):195--266, 2016.

\bibitem[Sai97]{Saito}
Takeshi Saito.
\newblock Modular forms and {$p$}-adic {H}odge theory.
\newblock {\em Invent. Math.}, 129(3):607--620, 1997.

\bibitem[Sas]{Sasaki}
Shu Sasaki.
\newblock Integral models of {H}ilbert modular varieties in the ramified case,
  deformations of modular {G}alois representations, and weight one forms
  {I}{I}.
\newblock preprint.

\bibitem[Sch90]{MR1047142}
A.~J. Scholl.
\newblock Motives for modular forms.
\newblock {\em Invent. Math.}, 100(2):419--430, 1990.

\bibitem[Ser72]{Serre}
Jean-Pierre Serre.
\newblock Propri\'{e}t\'{e}s galoisiennes des points d'ordre fini des courbes
  elliptiques.
\newblock {\em Invent. Math.}, 15(4):259--331, 1972.

\bibitem[Ser18]{Serban}
Vlad Serban.
\newblock An infinitesimal {$p$}-adic multiplicative {M}anin-{M}umford
  conjecture.
\newblock {\em J. Th\'{e}or. Nombres Bordeaux}, 30(2):393--408, 2018.

\bibitem[Shi12]{Shin}
Sug~Woo Shin.
\newblock Automorphic {P}lancherel density theorem.
\newblock {\em Israel J. Math.}, 192(1):83--120, 2012.

\bibitem[Ski09]{Skinner}
Christopher Skinner.
\newblock A note on the {$p$}-adic {G}alois representations attached to
  {H}ilbert modular forms.
\newblock {\em Doc. Math.}, 14:241--258, 2009.

\bibitem[SW01]{SW}
C.~M. Skinner and Andrew~J. Wiles.
\newblock Nearly ordinary deformations of irreducible residual representations.
\newblock {\em Ann. Fac. Sci. Toulouse Math. (6)}, 10(1):185--215, 2001.

\end{thebibliography}

\end{document}